\newtheorem{theorem}{Theorem}[section]
\newtheorem{lemma}[theorem]{Lemma}
\newtheorem{proposition}[theorem]{Proposition}
\theoremstyle{definition}
\newtheorem{definition}[theorem]{Definition}
\newtheorem{remark}[theorem]{Remark}
\providecommand{\customgenericname}{}
\newcommand{\newcustomtheorem}[2]{%
  \newenvironment{#1}[1]
  {%
   \renewcommand\customgenericname{#2}%
   \renewcommand\theinnercustomgeneric{##1}%
   \innercustomgeneric
  }
  {\endinnercustomgeneric}
}
\newcommand{\IR}{\mathbb{R}}
\newcommand{\IN}{\mathbb{N}}
\newcommand{\IZ}{\mathbb{Z}}
\newcommand{\IB}{\mathbb{B}}
\newcommand{\IT}{\mathbb{T}}
\newcommand{\cF}{\mathcal{F}}
\newcommand{\cS}{\mathcal{S}}
\newcommand{\cT}{\mathcal{T}}
\newcommand{\C}{\mathrm{C}}
\newcommand{\bla}{{\mathbf{a}}}
\newcommand{\blb}{{\mathbf{b}}}
\newcommand{\e}{\mathrm{e}}
\renewcommand{\d}{\mathrm{d}}
\newcommand{\divergence}{\operatorname{div}}
\DeclareMathOperator{\Esup}{ess\ sup}
\numberwithin{equation}{section} 
\title[Strong time-periodic solutions to the bidomain equations]{Strong time-periodic solutions to the bidomain equations with arbitrary large forces}
\author{Yoshikazu Giga}
\author{Naoto Kajiwara}
\author{Klaus Kress}
\address{Graduate School of Mathematical Sciences, The University of Tokyo, 3-8-1 Komaba, Meguro, Tokyo, 153-8914, Japan}
\address{Graduate School of Mathematical Sciences, The University of Tokyo, 3-8-1 Komaba, Meguro, Tokyo, 153-8914, Japan}
\address{Fachbereich Mathematik, Technische Universit\"at Darmstadt, Schlossgartenstr. 7, 64289 Darmstadt, Germany}
\email{labgiga@ms.u-tokyo.ac.jp}
\email{kajiwara@ms.u-tokyo.ac.jp}
\email{kkress@mathematik.tu-darmstadt.de}
\thanks{This work was partly supported by the DFG International Training Group IRTG 1529 and the JSPS Japanese-German Graduate Externship on Mathematical Fluid Dynamics. 
The first author is partly supported by JSPS through grant Kiban S (No.\,26220702), Kiban A (No.\,17H01091), Kiban B (No.\,16H03948). 
The second author is a research assistant supported by Kiban S (No. 26220702). The third author is supported by the DFG International Research Training Group 1529 on Mathematical Fluid Dynamics at TU Darmstadt }
\keywords{bidomain model, strong periodic solutions, weak-strong uniqueness, large data}
\begin{document}

\begin{abstract}
We prove the existence of strong time-periodic solutions to the bidomain equations with arbitrary large forces. 
We construct weak time-periodic solutions by a Galerkin method combined with Brouwer's fixed point theorem and a priori estimate independent of approximation. 
We then show their regularity so that our solution is a strong time-periodic solution in $L^2$ spaces. 
Our strategy is based on the weak-strong uniqueness method. 
\end{abstract}
\maketitle

\section{Introduction}\label{Sec: Introduction}

In this paper, we consider the bidomain system which is a well-established system describing the electrical wave propagation in the heart. 
The system is given by
\begin{align}\tag{BDE}\label{Eq: BDE}
\left\{
\begin{aligned}
\partial_t u - \divergence (\sigma_i \nabla u_i) + f(u,w) &= s_i & \mathrm{in} & ~ (0 , \infty) \times \Omega ,  \\
\partial_t u + \divergence (\sigma_e \nabla u_e) + f(u,w) &= - s_e & \mathrm{in} & ~ (0 , \infty) \times \Omega ,  \\
\partial_t w + g(u,w) &= 0 & \mathrm{in} & ~ (0 , \infty) \times \Omega ,  \\
u &= u_i - u_e & \mathrm{in} & ~ (0 , \infty) \times \Omega ,  \\
\sigma_i \nabla u_i \cdot \nu = 0 , ~ &\sigma_e \nabla u_e \cdot \nu = 0 & \mathrm{on} & ~ (0 , \infty) \times \partial \Omega ,  \\
u(0) = u_0 , ~ &w(0) = w_0 & \mathrm{in} & ~ \Omega. 
\end{aligned}
\right.
\end{align}
Here $\Omega \subset \IR^d$ denotes a domain describing the myocardium and the outward unit normal vector to $\partial \Omega$ is denoted by $\nu$. 
The unknown functions $u_i$ and $u_e$ model the intra- and extracellular electric potentials, and $u$ denotes the transmembrane potential. 
The variable $w$, the so-called gating variable, corresponds to the ionic transport through the cell membrane. 
The anisotropic properties of the intra- and extracellular tissue parts are described by the conductivity matrices $\sigma_i(x)$ and $\sigma_e(x)$, whereas $s_i(t,x)$ and $s_e(t,x)$ denote the intra- and extracellular stimulation current, respectively. The ionic transport is described by the nonlinearities $f$ and $g$. In this article, we will consider a large class of ionic models including those by FitzHugh--Nagumo, Rogers--McCulloch, and Aliev--Panfilov. Note, that we will look at the Aliev--Panfilov model in a slightly modified form as considered, e.g., in~\cite{GO}. The {\em FitzHugh--Nagumo model} reads as
\begin{align*}
f(u,w) &= u (u-a) (u-1) + w = u^3 - (a+1) u^2 +au + w,\\
g(u,w) &= -\varepsilon (ku-w),
\end{align*}
with $0<a<1$ and $k$, $\varepsilon >0$.\\
In the {\em Rogers--McCulloch model} the functions $f$ and $g$ are given by
\begin{align*}
f(u,w)&= bu(u-a)(u-1) + uw = bu^3 - b(a+1)u^2 + bau +uw,\\
g(u,w)&= - \varepsilon (ku -w),
\end{align*}
with $0<a<1$ and $b$, $k$, $\varepsilon >0$.\\
For the modified {\em Aliev--Panfilov model} we have
\begin{align*}
f(u,w) &= bu(u-a)(u-1) + uw = bu^3 - b(a+1)u^2 + bau +uw,\\
g(u,w) &= \varepsilon (ku(u-1-d) +w)
\end{align*}
with $0<a,d<1$ and $b$, $k$, $\varepsilon >0$. To get weak time-periodic solutions for the Aliev--Panfilov model we will need the further assumption $b > k$.
For a detailed description of the bidomain model we refer to the monographs by Keener and Sneyd~\cite{MR1673204} and Colli Franzone, Pavarino, and Scacchi~\cite{MR3308707}.

Since the bidomain model describes electrical activities in the heart, it is a natural question to ask whether it admits time-periodic solutions. 
Therefore, consider the situation where the bidomain model is innervated by periodic intra- and extracellular stimulation currents $s_i$ and $s_e$. 
Recently, Hieber, Kajiwara, Kress, and Tolksdorf~\cite{HKKT} proved the existence and uniqueness of a strong $T$-periodic solution to the innervated model in real interpolation spaces provided the external forces $s_i$ and $s_e$ are both time-periodic of period $T>0$. 
In their approach, they furthermore assumed that the external forces satisfy a suitable {\em smallness condition}. 

It is the goal of this paper to prove the existence of time-periodic solutions without assuming any smallness condition on the external forces. 
We employ the method given by Galdi, Hieber, and Kashiwabara~\cite{GHK} for the case of the primitive equations. 
First, the existence of weak time-periodic solutions is shown by using a Galerkin approximation combined with Brouwer's fixed point theorem. 
Then, we use the global well-posedness result by Colli Franzone and Savar\'e~\cite{MR1944157} and consider the {\it weak} time-periodic solution as a {\it weak} solution to the initial value problem. 
Finally, we apply a weak-strong uniqueness argument to get a strong-time periodic solution without assuming any smallness condition for the external applied currents.

The bidomain model was first introduced by Tung~\cite{Tun78} in 1978. 
Despite its central importance in cardiac electrophysiology, the rigorous mathematical analysis started not until the work of Colli Franzone and Savar\'e~\cite{MR1944157} in 2002. 
They introduced a variational formulation of the bidomain problem and  proved the existence and uniqueness of weak and strong solutions to the bidomain equations with FitzHugh--Nagumo type nonlinearities. 
A slightly more detailed review of their results is given in Section~\ref{Sec: Strong periodic solution}. 
Veneroni~\cite{MR2474265} extended their results to more general ionic models including the Luo and Rudy I model. 
In 2009, Bourgault, Cordi\`ere, and Pierre~\cite{MR2451724} presented a new approach to the bidomain system. 
They introduced the so-called bidomain operator within the $L^2$-setting and showed that it is a non-negative and selfadjoint operator. 
By using the bidomain operator, they transformed the bidomain system into an abstract evolution equation and showed the existence and uniqueness of a local strong solution and the existence of a global weak solution for a large class of ionic models including the three models introduced above.
Later, Kunisch and Wagner~\cite{KW13} showed uniqueness and further regularity for these weak solutions. 
Giga and Kajiwara~\cite{GK} investigated the bidomain system within the $L^p$-setting and showed that the bidomain operator is the generator of an analytic semigroup on $L^p(\Omega)$ for $p\in (1,\infty]$. 
Recently, Hieber and Pr\"uss proved the maximal $L_p$-$L_q$ regularity for the bidomain operator in \cite{HP2} and proved the global well-posedness in \cite{HP}. 
This paper considered the case $s_{i,e}=0$ and only FitzHugh-Nagumo type non-linearities. 
The general case was treated in \cite{Kajiwara}. 
However, to use the global well-posedness results in \cite{Kajiwara}, we need higher regularity for $s_{i,e}$ compared with \cite{MR1944157}. 
More recently, the bidomain equations were treated as a kind of  gradient system in \cite{BC}. 
They proved the global well-posedness results in $L^2$ spaces and energy spaces. 
Their paper also treated the case $s_{i,e}=0$. 

For results concerning the dynamics of the solution, we refer to the work of Mori and Matano~\cite{MM16}. They studied the stability of front solutions of the bidomain equations. 

On a microscopic level, the cardiac cellular structure is described by two disjoint domains $\Omega_i$ and $\Omega_e$, which denote the intra- and extracellular space, respectively, and which are separated by the the active membrane $\overline{\Gamma}= \partial \Omega_i  \cap \partial \Omega_e$. 
The intra- and extracellular quantities are defined on the corresponding domains and the transmembrane potential $u$ is a function on $\overline{\Gamma}$. 
After a homogenization procedure, see, e.g.,~\cite{MR1944157, PSC05}, the macroscopic model of the bidomain equations is obtained. 
Here all membrane, intra-, and extracellular quantities are defined everywhere on $\Omega$.

This paper is organized as follows: We start in Section~\ref{Sec: Preliminaries} with collecting known facts concerning the bidomain operator.  
In Section~\ref{Sec: Weak periodic solution}, we construct a weak time-periodic solution to the bidomain equations. 
When we construct the weak time-periodic solutions, we need some growth conditions on the nonlinear terms $f,g$. 
Fortunately, all three models mentioned above fulfill these conditions, which are confirmed in the appendix. 
A global well-posedness result is reviewed in Section~\ref{Sec: Strong periodic solution} and invoked to obtain a strong time-periodic solution for the FitzHugh--Nagumo model. 
We do not treat the other two models introduced above since the global well-posedness for the {\it initial value problem} is not proved in a suitable $L^2$ setting. 

\section{Preliminaries}\label{Sec: Preliminaries}

In this section, we fix some notation and formally introduce the bidomain operator in a weak as well as in a strong setting. In the whole article, let $\Omega \subset \IR^d$ denote a bounded domain whose boundary $\partial \Omega$ is of class $C^2$. For convenience, we use the following notation for the function spaces which we will use throughout this article
\begin{align*}
 V=H^1(\Omega),\  H=L^2(\Omega), \ V'=(H^1(\Omega))',
\end{align*}
where the spaces are endowed with their usual norms and they are Hilbert spaces. 
Furthermore, we set $Q=(0,T)\times\Omega$. 
The canonical pair of $V'$ and $V$ is denoted by ${}_{V'}\langle \cdot, \cdot \rangle_V$. 

We assume that the conductivity matrices $\sigma_i$ and $\sigma_e$ satisfy the following assumption.
\begin{assumption}{C}
\label{Ass: Conductivity matrices}
The conductivity matrices $\sigma_i , \sigma_e : \overline{\Omega} \to \IR^{d \times d}$ are symmetric matrices and are functions of class $C^1 (\overline{\Omega})$. 
Ellipticity is imposed by means of the following condition: there exist constants 
$\underline{\sigma}$, $\overline{\sigma}$ with $0 < \underline{\sigma} < \overline{\sigma}$ such that 
\begin{align}
\underline{\sigma} |\xi|^2 \leq {}^t\xi\sigma_i (x)\xi \leq \overline{\sigma}|\xi|^2 \quad \text{and} \quad \underline{\sigma} |\xi|^2 \leq {}^t\xi\sigma_e (x)\xi \leq \overline{\sigma}|\xi|^2 \label{Eq: UE}
\end{align}
for all $x \in \overline{\Omega}$ and all $\xi \in \IR^d$. 
Moreover, it is assumed that 
\begin{align} \begin{aligned}
\sigma_i \nabla u_i \cdot \nu = 0 \qquad &\Leftrightarrow \qquad \nabla u_i \cdot \nu =0 \quad &{\rm on}~\partial\Omega, \\
 \sigma_e \nabla u_e \cdot \nu = 0 \qquad &\Leftrightarrow \qquad \nabla u_e \cdot \nu =0 \quad &{\rm on}~\partial\Omega. \label{Eq: EV}
\end{aligned}
\end{align}
\end{assumption}
It is known due to~\cite{franzone1990mathematical} that~\eqref{Eq: EV} is biological reasonable.

First, we want to introduce the bidomain operator in a weak setting as well as the corresponding bidomain bilinear form. Therefore, we define $V_{av} (\Omega):= \{u\in V : \int_\Omega u \; \d x = 0\}$. Following~\cite{MR2451724}, we define the bilinear forms
\begin{align*}
a_i(u,v) := \int_{\Omega} \sigma_i \nabla u \cdot\nabla v \; \d x, \quad a_e(u,v) := \int_{\Omega} \sigma_e \nabla u \cdot\nabla v \; \d x
\end{align*}
for all $(u,v)\in V_{av}\times V_{av}$. Due to~\eqref{Eq: UE} these bilinear forms are symmetric, continuous and uniformly elliptic on $V_{av}\times V_{av}$. Then, we define the weak operators $A_i$ and $A_e$ from $V_{av}$ onto $V_{av}'$ by
\begin{align*}
\langle A_i u ,v \rangle := a_i (u,v), \quad \langle A_e u ,v \rangle := a_e (u,v)
\end{align*}
for all $(u,v)\in V_{av}\times V_{av}$. Let $P_{av}$ be the orthogonal projection from $V$ to $V_{av}(\Omega)$, i.e., $P_{av}u := u - \frac{1}{|\Omega|} \int_{\Omega} u \: \d x$ and denote its transpose by $P_{av}^T : V_{av}'\rightarrow V'$. Now we are able to define the weak bidomain operator and the corresponding bidomain bilinear form as
\begin{align*}
A&= P_{av}^T A_i (A_i + A_e)^{-1} A_e P_{av}, \\
a(u,v) &= \langle Au , v \rangle
\end{align*} 
for all $(u,v)\in V\times V$. We have the following lemma.

\begin{lemma}[{\cite[Theorem 6]{MR2451724}}]\label{Lemma: coercive}
The bidomain bilinear form $a(\cdot, \cdot)$ is symmetric, continuous and coercive on $V$,
\begin{align*}
\alpha \|u\|_V^2 &\leq a(u,u) + \alpha \|u\|_H^2, \quad &{\mbox for~all~} u\in V,\\
|a(u,v)|&\leq M \|u\|_V\|v\|_V,  &{\mbox for~all~} u,v \in V,
\end{align*}
for some constants $\alpha$, $M>0$. Furthermore, there exists an increasing sequence $0= \lambda_0 < \cdots \leq \lambda_i \leq \cdots$ in $\IR$ and an orthonormal Hilbert basis of $H$ of eigenvectors $(\psi_i)_{i\in\IN}$ such that for all $i\in\IN$, $\psi_i \in V$ and $v\in V$ it is $a(\psi_i,v)=\lambda_i (\psi_i, v)$.
\end{lemma}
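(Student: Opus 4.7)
The plan is to prove the statement by reducing the bidomain bilinear form to a sum of the elliptic forms $a_i, a_e$ through a variational decomposition of $u$, which is really the content of the Bourgault--Cordi\`ere--Pierre argument~\cite{MR2451724}. The starting observation is that $A_e \colon V_{av} \to V_{av}'$ and $A_i + A_e \colon V_{av} \to V_{av}'$ are both topological isomorphisms by Lax--Milgram applied to the symmetric, continuous, uniformly coercive forms $a_e$ and $a_i + a_e$ on $V_{av}$ (coercivity uses Assumption~\ref{Ass: Conductivity matrices} together with the Poincar\'e inequality on the mean-zero subspace $V_{av}$). In particular, $(A_i+A_e)^{-1}A_e$ is a bounded operator on $V_{av}$.

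Given $u \in V_{av}$, define $u_i := (A_i+A_e)^{-1} A_e u \in V_{av}$ and $u_e := u_i - u \in V_{av}$. A short computation using the definition yields the key identity $A_i u_i + A_e u_e = 0$ in $V_{av}'$. Using this and the splitting $v = v_i - v_e$ for the test function, I would compute for $u,v \in V_{av}$
\begin{align*}
 \langle A_i (A_i+A_e)^{-1} A_e u, v \rangle
 &= \langle A_i u_i, v_i \rangle - \langle A_i u_i, v_e \rangle\\
 &= a_i(u_i,v_i) + \langle A_e u_e, v_e \rangle
 = a_i(u_i, v_i) + a_e(u_e, v_e).
\end{align*}
Since $A = P_{av}^T A_i(A_i+A_e)^{-1}A_e P_{av}$ and $P_{av}^T$ is the transpose of $P_{av}$, this gives the global representation
\begin{align*}
 a(u,v) = a_i(u_i, v_i) + a_e(u_e, v_e), \qquad u,v \in V,
\end{align*}
where $(u_i,u_e),(v_i,v_e)$ correspond to $P_{av}u, P_{av}v$. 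Symmetry of $a(\cdot,\cdot)$ is then immediate from symmetry of $a_i$ and $a_e$, and continuity follows from the ellipticity upper bound in~\eqref{Eq: UE} together with the boundedness of $u \mapsto (u_i, u_e)$ from $V$ to $V_{av} \times V_{av}$.

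For coercivity, taking $v = u$ gives $a(u,u) \geq \underline{\sigma}(\|\nabla u_i\|_H^2 + \|\nabla u_e\|_H^2)$. Poincar\'e on $V_{av}$ upgrades this to control of $\|u_i\|_V^2 + \|u_e\|_V^2$, and since $P_{av} u = u_i - u_e$, this controls $\|P_{av}u\|_V^2$. Writing $u = P_{av}u + \bar u$ with $\bar u$ constant and $\|\bar u\|_H \leq \|u\|_H$, one obtains $\|u\|_V^2 \leq C(\|P_{av}u\|_V^2 + \|u\|_H^2)$, which yields the G{\aa}rding inequality $\alpha \|u\|_V^2 \leq a(u,u) + \alpha \|u\|_H^2$ after a constant rescaling.

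For the spectral assertion, the shifted form $a(\cdot,\cdot) + \alpha (\cdot,\cdot)_H$ is symmetric, continuous, and coercive on $V$, so by the Lax--Milgram isomorphism combined with the compact embedding $V \hookrightarrow H$ (which holds since $\Omega \subset \mathbb{R}^d$ is bounded and $\partial \Omega$ is $C^2$), the associated selfadjoint operator on $H$ has compact resolvent. The classical spectral theorem then delivers a nondecreasing sequence of eigenvalues $\lambda_i$ and a corresponding orthonormal Hilbert basis $(\psi_i)$ of $H$ with $\psi_i \in V$ and $a(\psi_i,v) = \lambda_i (\psi_i,v)$ for every $v \in V$; the constant eigenfunction gives $\lambda_0 = 0$ since $A$ annihilates constants by virtue of the $P_{av}$ factor. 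The main conceptual obstacle is establishing the variational decomposition identity cleanly, as it requires carefully tracking how $P_{av}$ and $P_{av}^T$ interact with the operators on $V_{av}$; once that algebraic identity is in hand the rest is standard Hilbert space machinery.
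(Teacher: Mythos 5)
The paper offers no proof of this lemma---it is quoted directly from Theorem 6 of Bourgault--Coudi\`ere--Pierre \cite{MR2451724}---and your argument is a correct reconstruction of precisely the decomposition $a(u,v)=a_i(u_i,v_i)+a_e(u_e,v_e)$ with $u_i=(A_i+A_e)^{-1}A_eP_{av}u$, $u_e=u_i-P_{av}u$ used in that reference, so there is nothing in the text itself to compare against. One small simplification: since $\nabla(P_{av}u)=\nabla u=\nabla u_i-\nabla u_e$, the G{\aa}rding inequality in the stated form with the \emph{same} constant $\alpha$ on both sides follows in one line from $a(u,u)\ge\underline{\sigma}\left(\|\nabla u_i\|_H^2+\|\nabla u_e\|_H^2\right)$ by taking $\alpha=\underline{\sigma}/2$, whereas your detour through Poincar\'e on $V_{av}$ only yields $\alpha\|u\|_V^2\le a(u,u)+\beta\|u\|_H^2$ with a possibly larger $\beta$, which is not literally the claimed inequality without that extra observation.
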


We next define the strong bidomain operator in the $L^q$-setting for $1<q<\infty$. We will use the same notation as for the weak setting since it will be clear from the context whether we consider the weak or strong formulation. To this end, let $L^q_{av} (\Omega):= \{u\in L^q(\Omega) : \int_\Omega u \; \d x = 0\}$ and let $P_{av}$ be the orthogonal projection from $L^q(\Omega)$ to $L^q_{av}(\Omega)$, i.e., $P_{av} u := u - \frac{1}{|\Omega|} \int_\Omega u \; \d x$. Then, we define the elliptic operators $A_i$ and $A_e$ by
\begin{align*}
A_{i,e} u  &:= - \divergence (\sigma_{i,e} \nabla u), \\
D(A_{i,e}) &:= \left\{ u \in W^{2,q} (\Omega) \cap L^q_{av}(\Omega) : \sigma_{i,e} \nabla u\cdot \nu =0 {\rm~a.e.~on~}\partial \Omega \right\} \subset L^q_{av}(\Omega).
\end{align*}
Here $A_{i,e}$ and $\sigma_{i,e}$ mean that either $A_i$ and $\sigma_i$ or $A_e$ and $\sigma_e$ are considered. Condition~\eqref{Eq: EV} implies that $D(A_i)=D(A_e)$. Hence, it is possible to define the sum $A_i+A_e$ with the domain $D(A_i)=D(A_e)$. Note that the inverse operator $(A_i+A_e)^{-1}$ on $L_{av}^q(\Omega)$ is a bounded linear operator.

Following~\cite{GK} we define the bidomain operator as follows. Let $\sigma_i$ and $\sigma_e$ satisfy Assumption~\ref{Ass: Conductivity matrices}. Then the bidomain operator $A$ is defined as 
\begin{align}
A = A_i (A_i+A_e)^{-1} A_e P_{av}
\end{align}
with domain 
\begin{align*}
D(A) := \{u \in W^{2,q}(\Omega) : \nabla u \cdot \nu = 0 {\rm~a.e.~on~} \partial \Omega\}.
\end{align*}

If we assume conservation of currents, i.e.,
\begin{align}
\label{Eq: Conversation of currents}
\int_{\Omega} (s_i(t)+s_e(t)) \; \d x =0, \quad t\geq 0
\end{align}
and moreover $\int_{\Omega} u_e \; \d x =0$, the bidomain equations~\eqref{Eq: BDE} may be equivalently rewritten as an evolution equation~\cite{MR2451724,GK} of the form 
\begin{align*}
\tag{ABDE}\label{Eq: ABDEIV}
\left\{
\begin{aligned}
\partial_t u + Au + f(u,w)&=s, \qquad &\mathrm{in} \ (0,\infty),\\
\partial_t w + g(u,w)&=0, &\mathrm{in} \ (0,\infty),\\
u (0)= u_0,~& ~w(0)= w_0, 
\end{aligned}
\right.
\end{align*}
where 
\begin{align}
\label{Eq: Modified source term}
 s:=s_i-A_i(A_i+A_e)^{-1}(s_i+s_e)
\end{align}
is the modified source term. The functions $u_e$ and $u_i$ can be recovered from $u$ by virtue of the following relations
\begin{align*}
u_e&=(A_i+A_e)^{-1}\{(s_i+s_e)-A_iP_{av}u\},\\
u_i&=u+u_e.
\end{align*}

\section{Weak time-periodic solutions}\label{Sec: Weak periodic solution}
In this section, we show the existence of weak time-periodic solutions by using a Galerkin approximation. 
We consider the bidomain equations under some growth conditions on $f$ and $g$ which contain the nonlinearities introduced in Section~\ref{Sec: Introduction}. 

We use the abstract form 

\begin{align*}
\tag{PABDE}\label{Eq: ABDE}
\left\{
\begin{aligned}
u' + Au + f(u,w)&=s, \qquad &\mathrm{in} \ \IT\times\Omega,\\
w' + g(u,w)&=0, &\mathrm{in} \ \IT\times\Omega,\\
u (t+T, x)= u(t,x),~& ~w(t+T)= w(t,x), 
\end{aligned}
\right.
\end{align*}
where $s$ is a $T$-periodic function for some $T>0$ and $\IT:= \IR/ T \IZ$ denotes the time torus. 
We assume that the nonlinear terms $f$ and $g$ satisfy the following conditions. 
\begin{assumption}{N}
\label{Ass: nonlinear}
Let $p>1$ be the number so that the Sobolev embedding $V\subset L^p(\Omega)$ holds. 
In other words, $2\le p$ if $d=2$; or $2\le p \le 6$ if $d=3$. 
The nonlinear terms $f,g:\IR\times\IR\to \IR$ are of the form 
\begin{align*}
f(u,w) &= f_1(u) + f_2(u)w, \\
g(u,w) &= g_1(u) + g_2 w, 
\end{align*}
where $g_2\in\IR$ and $f_1, f_2, g_1: \IR\to \IR$ are continuous functions. 
The functions are assumed to satisfy that there exist constants $C_0\in\IR$, $C_i>0$ $(i=1,\dots,5)$ and  $r>0$ such that
\begin{align}
C_0 + C_1|u|^p + C_2|w|^2 &\le r f(u,w)u + g(u,w)w \label{lower}\\
|f_1(u)| &\le C_3(1+ |u|^{p-1}) \label{growth1}\\
|f_2(u)| &\le C_4 (1+|u|^{p/2-1})\label{growth2}\\
|g_1(u)| &\le C_5(1+ |u|^{p/2})\label{growth3}
\end{align} 
for all $u,w \in \IR$. 
\end{assumption}
This assumption is a modified version of the assumption used in \cite{MR2451724}. 
Note that the three models introduced in Section~\ref{Sec: Introduction} hold the assumption as $p=4$. 
We shall check it in the appendix. 
We see that the following inequality holds: for any $(u,w)\in L^p(\Omega)\times H$, we have 
\begin{align*}
\|f(u,w)\|_{p'}^{p'} &\le C_6 (1+ \|u\|_p^p + \|w\|_H^2)\\
\|g(u,w)\|_H^2&\le C_7 (1+\|u\|_p^p +\|w\|_H^2)
\end{align*}
for some $C_i>0~(i=6,7)$ depending on $p$ and $C_3,\dots,C_5$, where $p'$ is the H\"older conjugate exponent, i.e., $1/p+1/p'=1$. 
In particular, $f(u,w) \in L^{p'}(Q)$ and $g(u,w) \in L^2(Q)$ for all $u\in L^p(Q), w\in L^2(Q)$. 
See in \cite[Lemma 25]{MR2451724}. 

Under this assumption, weak time-periodic solutions for (\ref{Eq: ABDE}) are defined as follows.

\begin{definition}\label{Def: Weak solution}
Let $T > 0$, $s \in L^2(\IT; V')$. 
Suppose that the Assumption \ref{Ass: nonlinear} holds. 
Then a pair of $(u, w)$ of $u: \IT \times \Omega \rightarrow \IR$, $w:\IT\times\Omega\rightarrow \IR$ is called a weak $T$-periodic solution to~\eqref{Eq: ABDE} if 
\begin{enumerate}[(i)]
\item $u \in C_w(\IT ; H) \cap L^2(\IT ; V)\cap L^p(Q)$, 
         $w \in C_w(\IT; H)$,\\
\item For all $\varphi_1 \in W^{1,2}(0,T; H) \cap L^2(0,T; V)\cap L^p(Q)$ and all $\varphi_2 \in W^{1,2}(0,T; H)$, 
\begin{align*}
\int_0^t \{(u,\partial_t \varphi_1) - a(u, \varphi_1) - {}_{p'}\langle f(u,w),\varphi_1\rangle_{p} \} \; \d \tau &= - \int_0^t {}_{V'}\langle s, \varphi_1\rangle_{V} \;\d \tau +(u(t),\varphi_1(t)) - (u(0),\varphi_1(0)),  \\
\int_0^t \{(w,\partial_t \varphi_2) - (g(u,w),\varphi_2) \} \; \d \tau &= (w(t),\varphi_2(t)) - (w(0),\varphi_2(0)),
\end{align*}
for all $t \in (0,T)$. 
Here $(\cdot, \cdot)$ denotes the $L^2$-inner product. 
\end{enumerate} 
A weak $T$-periodic solution $(u,w)$ is called strong if, in addition to above, it holds 
\begin{align*}
u\in W^{1,2}(\IT; H)\cap L^2(\IT; H^2(\Omega)), w\in W^{1,2}(\IT; H). 
\end{align*}
\end{definition}

The result on existence of weak time-periodic solutions reads as follows.

\begin{theorem}\label{Thm: Weak periodic solution}
Let $T>0$. For every $T$-periodic function $s \in L^2(\IT ; V')$ there exists at least one weak $T$-periodic solution $(u,w)$ to~\eqref{Eq: ABDE}.
\end{theorem}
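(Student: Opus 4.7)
The plan is to adapt the Galerkin/Brouwer strategy of Galdi--Hieber--Kashiwabara~\cite{GHK} to~\eqref{Eq: ABDE}. I would use the Hilbert basis $(\psi_i)_{i\in\IN}$ of $H$ supplied by Lemma~\ref{Lemma: coercive}, set $V_n := \mathrm{span}\{\psi_1,\dots,\psi_n\}$, and look for Galerkin approximants $u_n(t)=\sum_{i=1}^n c_i^n(t)\psi_i$, $w_n(t)=\sum_{i=1}^n d_i^n(t)\psi_i$ solving the projected system obtained by testing the two evolution equations against each $\psi_j$. This yields a nonlinear ODE in $\IR^{2n}$ with continuous right-hand side, locally solvable by Peano's theorem.

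The central a priori estimate comes from testing the first projected equation with $r u_n$ and the second with $w_n$, summing, and invoking the structural bound~\eqref{lower} together with the coercivity of $a$ from Lemma~\ref{Lemma: coercive}. The remainder $r\alpha\|u_n\|_H^2$ produced by coercivity is absorbed into $C_1\|u_n\|_p^p$ via $\|u\|_H^2 \leq |\Omega|^{1-2/p}\|u\|_p^2$ and Young's inequality (using $p\geq 2$), while $r\,{}_{V'}\langle s,u_n\rangle_V$ is absorbed against a fraction of $r\alpha\|u_n\|_V^2$. This produces the differential inequality
\begin{align*}
\tfrac{\d}{\d t} E_n(t) + \gamma E_n(t) \leq C\bigl(1 + \|s(t)\|_{V'}^2\bigr),
\end{align*}
where $E_n := \tfrac{r}{2}\|u_n\|_H^2 + \tfrac{1}{2}\|w_n\|_H^2$ and $\gamma := \min(\alpha,2C_2)>0$. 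This prevents blow-up of the Galerkin ODE, and Gr\"onwall yields $E_n(T) \leq e^{-\gamma T}E_n(0) + K$ with $K$ independent of $n$ and of the initial data. Hence the Poincar\'e map $\Phi_n:(u_n(0),w_n(0))\mapsto(u_n(T),w_n(T))$ sends the closed $H$-ball of radius $R$ into itself as soon as $R^2 \geq K/(1-e^{-\gamma T})$, and its continuity (by continuous dependence on initial data for the finite-dimensional ODE) combined with \emph{Brouwer's fixed point theorem} yields a $T$-periodic Galerkin solution $(u_n,w_n)$.

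The main obstacle is the passage to the limit $n\to\infty$ in the nonlinear terms. Integrating the energy identity over one period and exploiting periodicity produces $n$-uniform bounds of $u_n$ in $L^\infty(\IT;H)\cap L^2(\IT;V)\cap L^p(Q)$ and $w_n$ in $L^\infty(\IT;H)$; the growth conditions~\eqref{growth1}--\eqref{growth3} then bound $f(u_n,w_n)$ in $L^{p'}(Q)$ and $g(u_n,w_n)$ in $L^2(Q)$, whence the Galerkin equations bound $\partial_t u_n$ in $L^2(\IT;V')+L^{p'}(Q)$ and $\partial_t w_n$ in $L^2(\IT;H)$. Aubin--Lions extracts a subsequence with $u_n\to u$ strongly in $L^2(\IT;H)$ and a.e.\ on $Q$. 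The scalar terms $f_1(u_n)$ and $g_1(u_n)$ then converge weakly to $f_1(u)$ and $g_1(u)$ in $L^{p'}(Q)$ and $L^2(Q)$ by a.e.\ convergence, the growth bounds, and Vitali. For the bilinear contribution $f_2(u_n)w_n$, the growth bound~\eqref{growth2} together with a.e.\ convergence yields strong convergence of $f_2(u_n)$ to $f_2(u)$ in a Lebesgue space dual to that carrying the weak-$*$ limit of $w_n$, which is enough to identify the product in the sense of distributions after pairing with a dense class of time-periodic test functions; the linear term $g_2 w_n$ is immediate. Periodicity is inherited in the limit from the weak-in-time continuity $u,w\in C_w(\IT;H)$, a standard consequence of the time-derivative bounds, producing the desired weak $T$-periodic solution in the sense of Definition~\ref{Def: Weak solution}.
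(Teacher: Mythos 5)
Your proposal follows the paper's strategy almost step for step: the same Galerkin scheme on the eigenbasis of Lemma~\ref{Lemma: coercive}, the same energy estimate driven by \eqref{lower} and the coercivity of $a$ (including absorbing the $-\alpha\|u_n\|_H^2$ remainder into $C_1\|u_n\|_p^p$ using $p>2$), the same Gr\"onwall bound making the Poincar\'e map a self-map of a ball of explicitly computable radius, Brouwer's fixed point theorem, and the same $n$-uniform bounds obtained by integrating the energy inequality over one period. The one genuine divergence is the compactness step: you invoke Aubin--Lions via a uniform bound on $\partial_t u_n$, while the paper proves equicontinuity of the scalar functions $t\mapsto (u_k(t),\psi_\ell)$, applies Ascoli--Arzel\`a plus a diagonal argument to get convergence in $C_w(\IT;H)$, and then upgrades to strong convergence in $L^2(Q)$ by Friedrich's inequality. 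Your route works but carries a hidden obligation you should discharge: $\partial_t u_n$ is the $H$-orthogonal projection $P_n$ of $-Au_n - f(u_n,w_n)+s$ onto $\mathrm{span}\{\psi_0,\dots,\psi_n\}$, so the claimed bound in $L^2(\IT;V')+L^{p'}(Q)$ requires $P_n$ to be uniformly bounded on $V$ (hence, by self-adjointness, on $V'$); this holds here precisely because the $\psi_i$ diagonalize the form $a$, but it is not automatic and should be stated. The paper's Friedrich/Ascoli route avoids any time-derivative estimate and delivers $u,w\in C_w(\IT;H)$ directly, which you still need for Definition~\ref{Def: Weak solution}(i) and to make sense of periodicity in the limit. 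Two smaller points to tighten: Peano's theorem alone does not produce a single-valued, continuous Poincar\'e map --- you need uniqueness for the finite-dimensional system, i.e.\ local Lipschitz continuity of $f_1,f_2,g_1$ (true for the three polynomial models, and implicitly used by the paper as well); and for the product term $f_2(u_n)w_n$ the H\"older exponents are borderline ($f_2(u_n)$ is bounded exactly in $L^{2p/(p-2)}(Q)$), so the Vitali argument you sketch needs the careful treatment that the paper delegates to \cite{MR2451724}.
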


\begin{proof}
Let $\{\psi_i\}_{i=0}^{\infty} \subset V$ be the orthonormal basis of eigenvectors of the bidomain bilinear form $a$ in $H$ 
and let $\{\lambda_i\}_{i=0}^{\infty} \subset \IR_{\ge0}$ be the corresponding eigenvalues, i.e., $a(U, \psi_i) = \lambda_i (U, \psi_i)_{L^2(\Omega)}$ for all $U\in V, i=0,1,\dots$. 
Let
\begin{align}
u_k(t,x) &:= \sum_{i=0}^k \alpha_{ki}(t)\psi_i(x),\label{Eq: Definition uk}\\
w_k(t,x)&:= \sum_{i=0}^k \beta_{ki}(t)\psi_i(x),\label{Eq: Definition wk}
\end{align}
with $\alpha_k(t) =\{\alpha_{kj}(t)\}_{j=0}^k$, $\beta_k(t)=\{\beta_{kj}(t)\}_{j=0}^k$, which are the solutions of the system of the ordinary differential equations
\begin{align}
\left\{
\begin{aligned}
 \frac{\d}{\d t}\alpha_{kj} &= -\alpha_{kj} \lambda_j - \int_{\Omega} f(u_k, w_k) \psi_j \; \d x + {}_{V'}\langle s(t), \psi_j\rangle_{V}, \\
\frac{\d}{\d t} \beta_{kj} &= -\int_{\Omega} g(u_k,w_k) \psi_j \; \d x, \\
\alpha_{kj}(0)&= a_j, \\
\beta_{kj}(0)&= b_j, 
\end{aligned}\label{Eq: Galerkinsystem}
\right.
\end{align}
for $j=0,1,\dots,k$. 
The initial data $\bla_k=\{a_j\}_{j=1}^k$ and $\blb_k=\{b_j\}_{j=1}^k$ are fixed later. 
By the standard theory of ordinary differential equations, this system admits a unique solution $(\alpha_k ,\beta_k) \subset (W^{1,2}(0,T_k))^{2(k+1)}$ on some interval $(0, T_k)$. 
It is either $|\alpha_k(t)| + |\beta_k(t)|\to \infty$ as $t\nearrow T_k$ or we can take any finite time $T_k$. 
In the following, it is shown that $|\alpha_k(t)| + |\beta_k(t)|\to \infty$ as $t\nearrow T_k$ does not occur by using a priori estimates. 
To this end, multiplying the first equation of~\eqref{Eq: Galerkinsystem} with $r\cdot\alpha_{kj}$, the second equation with $\beta_{kj}$, and summing over $j$ yield
\begin{align*}
&\frac{1}{2}\frac{\d}{\d t}\left(r\|u_k(t)\|_H^2 + \|w_k(t)\|_H^2\right) + r a(u_k(t) , u_k(t)) +  \int_{\Omega} r f(u_k(t),w_k(t))u_k(t)   + g(u_k(t),w_k(t))w_k(t) \, \d x \\&= r {}_{V'}\langle s(t), u_k(t)\rangle_{V}.
\end{align*}
We recall that the bidomain bilinear form $a$ has the coercivity of the form
\begin{align*}
\alpha \|U\|_V^2 \le a(U,U) + \alpha \|U\|_H^2
\end{align*}
for all $U\in V$ and for some constant $\alpha>0$, see~\cite{MR2451724}.  
By the coercivity of $a$, the Assumption \ref{Ass: nonlinear}, and Young's inequality, it is
\begin{align*}
&\frac{\d}{\d t}\left(r \|u_k(t)\|_H^2 +  \|w_k(t)\|_H^2\right) + C_{11} \|u_k(t)\|_V^2 + C_{12} \|u_k(t)\|_p^p - C_{13} \|u_k(t)\|_H^2  + C_{14}\|w_k(t)\|_H^2 \\
\leq & C_{15} \|s(t)\|_{V'}^2 + C_{16}, 
\end{align*}
for some constants $C_{1i}=C_{1i}(r,\alpha, C_j)>0~(i=1,\dots,6,\ j=0,\dots,2)$; we emphasize that all constants $C_{1i}$ are independent of $k$.  
We use the estimate 
\begin{align*}
C_{17}\|u_k(t)\|_p^p - C_{18} \le C_{12} \|u_k(t)\|_p^p - C_{13} \|u_k(t)\|_2^2
\end{align*}
for some $C_{17}, C_{18}>0$ since $2<p<\infty$. 
Therefore, we have the following estimate 
\begin{align}
&\frac{\d}{\d t}\left(r \|u_k(t)\|_H^2 +  \|w_k(t)\|_H^2\right) + C_{21} \left( r\|u_k(t)\|_V^2 + \|u_k(t)\|_p^p + \|w_k(t)\|_H^2\right) \nonumber \\
\leq & C_{22} \|s(t)\|_{V'}^2 + C_{23}, \label{ineq}
\end{align}
for some constants $C_{2i}>0~(i=1,2,3)$. 

Then, we apply Gronwall's inequality for the inequality (\ref{ineq}), then 
\begin{align}\label{Eq: Gronwall}
&r\|u_k(t)\|_H^2 + \|w_k(t)\|_H^2 \nonumber\\
\leq &\e^{-C_{21} t}( r\|\bla_k\|_H^2 +  \|\blb_k\|_H^2) +\int_0^t \e^{-C_{21}(t-\tau)}(C_{22}\|s(\tau)\|_{V'}^2 + C_{23}) \; \d \tau. 
\end{align}
Since $\|u_k(t)\|_H^2 = |\alpha_k(t)|^2$ and $\|w_k(t)\|_H^2 = |\beta_k(t)|^2$, this implies $T_k$ does not blow up at any finite time. 
We consider the Poincar\'e map 
\begin{align*}
\cS : \IR^{k+1}\times\IR^{k+1} \rightarrow \IR^{k+1}\times\IR^{k+1},\\
\cS (\bla_k,\blb_k) := (\alpha_k(T), \beta_k(T)).
\end{align*}
We define 
\begin{align*}
\IB_R:=\left\{(\bla_k,\blb_k)=(\{a_j\}_{j=0}^k,\{b_j\}_{j=0}^k)\in \IR^{k+1}\times \IR^{k+1}\middle| r \left(\sum_{j=0}^k|a_j|^2\right)^{1/2} + \left(\sum_{j=0}^k|b_j|^2\right)^{1/2} \le R\right\}
\end{align*}
with 
\begin{align}\label{R}
R^2 = \frac{\int_0^T \e^{-C_{21} (T-\tau)} (C_{22}\|s(\tau)\|_{V'}^2+C_{23})\, \d \tau}{1-\e^{-C_{21} T}}. 
\end{align}
Then, it follows that $\cS$ maps $\IB_R$ into itself from \eqref{Eq: Gronwall}. 
Since $\cS$ is also continuous, by Brouwer's fixed point theorem we conclude that $\cS$ admits a fixed point $(\bar{\bla}_k,\bar{\blb}_k)=\cS(\bar{\bla}_k,\bar{\blb}_k)$ in $\IB_R$ for all $k\in\IN$. 

In the following, we denote by $u_k$ and $w_k$ the functions defined in~\eqref{Eq: Definition uk} and~\eqref{Eq: Definition wk} respectively, corresponding to the solutions $\alpha_k$, $\beta_k$ of~\eqref{Eq: Galerkinsystem} with initial values $\bar{\bla}_k$, $\bar{\blb}_k$. 
Then, $u_k(0,x)=u_k(T,x)$ and $w_k(0,x)=w_k(T,x)$. 
Moreover, we see $u_k(t+T,x)= u_k(t,x)$ and $w_k(t+T,x)=w_k(t,x)$ for all $t \in\IR$ by periodically expansion. 
In the next step, we would like to pass to the limit $k\rightarrow \infty$ and show the existence of a weak solution to the original problem~(ABDE). 
To do so, we consider the uniform boundedness. 
For the inequality \eqref{Eq: Gronwall}, we take the supremum from $t=0$ to $t=T$, then by using (\ref{R})
\begin{align*}
\|u_k\|_{L^\infty(0,T; H)} + \|w_k\|_{L^\infty (0,T; H)} \le C_{31} \|s\|_{L^2(0,T; V')} + C_{32},  
\end{align*}
for some $C_{3i}>0~(i=1,2)$. 
Moreover, for the inequality (\ref{ineq}), integrate from $t=0$ to $t=T$ to get 
\begin{align}\label{energy ineq}
\|u_k\|^2_{L^2(0,T; V)} + \|u_k\|^p_{L^p(Q)} + \|w_k\|^2_{L^2(0,T; H)}\le C_{41} \|s\|^2_{L^2(0,T; V')} + C_{42}. 
\end{align}
for some $C_{4i}>0~(i=1,2)$. 
This implies that there are sub-sequences of $\{u_k\}_{k=1}^\infty$ and $\{w_k\}_{k=1}^\infty$, for convenience still denoted by $\{u_k\}_{k=1}^\infty$ and $\{w_k\}_{k=1}^\infty$, that converges to $u$ weakly in $L^2(0,T;V) \cap L^p(Q)$ and converges to $w$ weakly in $L^2(0,T;H)$. 

By construction of the function $u_k$ and $w_k$, 
\begin{align*}
(\partial_t u_k(t), \psi_\ell) + a(u_k(t),\psi_\ell) + {}_{p'}\langle f(u_k,w_k),\psi_\ell\rangle_p &= {}_{V'}\langle s(t), \psi_\ell\rangle_V\\
(\partial_t w_k(t), \psi_\ell) + (g(u_k,w_k),\psi_\ell) &= 0
\end{align*}
for all $\ell=0,\dots, k$. 
Integrate from $t_0$ to $t_1$ $(0\le t_0 \le t_1\le T)$, then we get 
\begin{align*}
&|(u_k(t_1), \psi_\ell) - (u_k(t_0),\psi_\ell)| \\
=&  |\int_{t_0}^{t_1} - a(u_k,\psi_\ell) - {}_{p'}\langle f(u_k,w_k),\psi_\ell\rangle_p + {}_{V'}\langle s, \psi_\ell\rangle_{V}\, \d \tau|\\
\le & \|u_k\|_{L^2(0,T; V)} \|\psi_\ell\|_{L^2(0,T; V)} + \|f(u_k,w_k)\|_{L^{p'}(Q)}\|\psi_\ell\|_{L^p(Q)} + \|s\|_{L^2(t_0,t_1;V')}\|\psi_\ell\|_{L^2(0,T; V)}\\
\le & C(|t_1-t_0|^{1/2} + |t_1-t_0|^{1/p} + \|s\|_{L^2(t_0,t_1;V')})\\
&|(w_k(t_1), \psi_\ell) - (w_k(t_0),\psi_\ell)| \\
=&  |\int_{t_0}^{t_1} - (g(u_k,w_k),\psi_\ell)\, \d \tau|\\
=& \|g(u_k,w_k)\|_{L^2(Q)} \|\psi\|_{L^2(Q)}\\
\le &C |t_1-t_0|^{1/2}
\end{align*}
for some $C=C(s,\psi_\ell)$ independent of $t_0, t_1$ and $k$, where we use the inequality (\ref{energy ineq}) and the embedding assumption $(\psi_\ell\in) V\subset L^p(\Omega)$. 
Therefore, it follows that for any $\varepsilon>0$ there is a $\delta>0$ with 
\begin{align*}
|(u_k(t_1), \psi_\ell) - (u_k(t_0),\psi_\ell)| + |(w_k(t_1), \psi_\ell) - (w_k(t_0),\psi_\ell)| < \varepsilon\quad {\rm if}~|t_1-t_0|\le \delta, ~k=1,2,\dots. 
\end{align*}
This means the families $\{(u_k(t), \psi_\ell)\}_{k=1}^\infty$ and $\{(w_k(t), \psi_\ell)\}_{k=1}^\infty$ are equicontinuous. 
Since $\{(u_k,\psi_\ell)\}_{k=1}^\infty$ and $\{(w_k,\psi_\ell)\}_{k=1}^\infty$ are uniform bounded in $k$, from Ascoli-Arzela's theorem, it follows that the subsequences $\{(u_k(t),\psi_\ell)\}_{k=1}^\infty$ and $\{(w_k(t),\psi_\ell)\}_{k=1}^\infty$ converge uniformly to continuous functions $(u(t), \psi_\ell)$ and $(w(t), \psi_\ell)$ for each fixed $\ell$. 
By the Cantor diagonalization argument and a density argument, this convergence can be generalized that for each $\psi\in H$, $\{(u_k(t),\psi)\}_{k=1}^\infty$ and $\{(w_k(t),\psi)\}_{k=1}^\infty$ converge uniformly to continuous functions $(u(t),\psi)$ and $(w(t),\psi)$. 
Therefore, we have $u\in C_w(\IT ; H)$ and $w\in C_w(\IT ; H)$. 

It remains to show the weak convergence of the nonlinear terms $f(u_k,w_k), g(u_k,w_k)$. 
We first prove $u_k\to u$ in $L^2(Q)$. 
To do so, we use Friedrich's inequality, which states that for any $\varepsilon>0$, there exists $J\in\IN$ and $\phi_1,\phi_2,\dots,\phi_J\in H$ such that for all $U\in V$, the following inequality holds
\begin{align*}
\|U\|_H^2 \le \sum_{j=1}^J \left|\int_\Omega U \phi_j \,\d x\right|^2 + \varepsilon \|\nabla U\|_H^2. 
\end{align*}
For Friedrich's inequality, see e.g. \cite{Galdi}. 
This inequality with $U=u_k - u$, the uniform boundedness of $\{u_k\}_{k=1}^\infty \subset L^2(0,T; V)$, and $u_k\to u$ in $C_w(\IT; H)$ implies that $u_k\to u$ in $L^2(Q)$. 
Since we have $u_k\to u$ a.e. in $Q$ and $f_1, f_2, g_1$ are continuous, $f_1(u_k)\to f_1(u), f_2(u_k) \to f_2(u), g_1(u_k)\to g(u)$ a.e. in $Q$ are satisfied. 
We shall show uniform boundedness in $L^{p'}(Q)$ for $f(u_k,w_k)$ and uniform boundedness in $L^2(Q)$ for $g(u_k,w_k)$, which implies $f(u_k,w_k) \to f(u,w)$ weakly in $L^{p'}(Q)$ and $g(u_k,w_k)\to g(u,w)$ weakly in $L^2(Q)$. 
Fortunately, under the Assumption \ref{Ass: nonlinear}, it has already been proved in \cite[p.477]{MR2451724}. 
Since the functions $u_k$, $w_k$ satisfy that for all $\varphi_1 \in H^1(0,T; H) \cap L^2(0,T; V)\cap L^p(Q)$ and all $\varphi_2 \in W^{1,2}(0,T; H)$, 
\begin{align*}
\int_0^t \{(u_k,\partial_t \varphi_1) - a(u_k, \varphi_1) - {}_{p'}\langle f(u_k,w_k),\varphi_1\rangle_{p} \} \; \d \tau &= - \int_0^t {}_{V'}\langle s, \varphi_1\rangle_{V} \;\d \tau +(u_k(t),\varphi_1(t)) - (u_k(0),\varphi_1(0)),  \\
\int_0^t \{(w_k,\partial_t \varphi_2) - (g(u_k,w_k),\varphi_2) \} \; \d \tau &= (w_k(t),\varphi_2(t)) - (w_k(0),\varphi_2(0)),
\end{align*}
for all $t \in (0,T)$, combining above discussions about the weak convergence, we show the existence of a weak $T$-periodic solution. 
\end{proof}

\section{Regularity of weak periodic solution}\label{Sec: Strong periodic solution}

In this section, we shall show that for the FitzHugh--Nagumo nonlinearities introduced in Section~\ref{Sec: Introduction} the weak time-periodic solution constructed in the previous section is actually a strong solution. 
In order to do so, we first review the global strong well-posedness result by Colli Franzone and Savar\'e~\cite{MR1944157}. 
After that, we use a weak-strong uniqueness argument to show the existence of a strong time-periodic solution for~\eqref{Eq: ABDE} with FitzHugh--Nagumo nonlinearities. 
In ~\cite{MR1944157}, they considered the initial boundary value problem for the bidomain equations of the form 
\begin{align*}
\tag{BDE II}\label{Eq: BDEII}
\left\{
\begin{aligned}
\partial_{t}u - \divergence (\sigma_i \nabla u_i) + f(u)+\theta w&=s_i, \qquad &\mathrm{in} \ (0,\infty)\times\Omega,\\
\partial_{t}u + \divergence (\sigma_e \nabla u_e) + f(u)+\theta w&=-s_e, \qquad &\mathrm{in} \ (0,\infty)\times\Omega,\\
u &= u_i - u_e, \qquad &\mathrm{in} \ (0,\infty)\times\Omega,\\
\partial_{t}w + \gamma w - \eta u&=0, &\mathrm{in} \ (0,\infty)\times\Omega,\\
\sigma_i \nabla u_i \cdot \nu = g_i, ~\sigma_e \nabla u_e\cdot \nu &= g_e, &\mathrm{in} \ (0,\infty)\times\partial\Omega,\\
u (0)= u_0,~& ~w(0)= w_0, &\mathrm{in}\ \Omega,  \\
\end{aligned}
\right.
\end{align*}
with $\theta, \gamma, \eta>0$. 

They regarded the bidomain equation as the {\it degenerate} variational formulation of the form 
\begin{align*}
\left\{
\begin{aligned}
(Bu)' + Au + \cF u& =L \hspace{5mm}  t \in (0,T)\\
(Bu)(0) &= \ell^0,  
\end{aligned}
\right.
\end{align*}
and constructed the global weak formulation and their regularity. 

Let $\Omega$ be a Lipschitz domain of $\IR^d$, $\Gamma:=\partial\Omega$, and the measurable function $\sigma_{i,e}: \overline{\Omega}\to\IR^{d\times d}$ satisfy the uniform ellipticity condition. 
Assume the nonlinear term $f$ is a continuous function with 
\begin{align}
f(0) = 0,~\exists\lambda_f \ge0: \frac{f(x)-f(y)}{x-y}\ge -\lambda_f,~~\forall x,y\in\IR,~{\rm with}~ x\neq y. \label{assumption(f)}
\end{align}
Their result is as follows. 

\begin{theorem}[Franzone-Savar\'e '02 \cite{MR1944157}]
Let us assume $s_{i,e}\in L^2(0,\cT; H)$, $g_{i,e} \in W^{1,1}(0,\cT; H^{-1/2}(\Gamma))$ satisfy $s_i + s_e \in W^{1,1}(0, \cT; H)$ and the compatibility condition 
\begin{align*}
\int_\Omega (s_i + s_e) \; \d x + {}_{H^{-1/2}(\Gamma)}\langle g_i + g_e, 1\rangle_{H^{1/2}(\Gamma)} = 0. 
\end{align*}
Then for any initial data $u_0 , w_0 \in H$, there uniquely exist a couple 
\begin{align*}
u_{i,e} \in L^2(0,\cT; V),~\int_\Omega u_e = 0~{\rm a.e~}t
\end{align*}
and 
\begin{align*}
&u \in C([0,\cT]; H) \cap L^2(0,\cT; V),~\partial_t u \in L^2_{loc}(0,\cT; H), \\
&f(u(t)) \in L^1(\Omega)\cap V'~a.e.~t\in(0,\cT), \\
&w, \partial_t w \in C([0,\cT]; H), 
\end{align*}
which solves the bidomain equation in the sense of 
\begin{align*}
&\int_\Omega (\partial_t u \hat{u} + \frac{\theta}{\eta} \partial_t w \hat{w}) \;\d x + \int_\Omega f(u) \hat{u} \;\d x + \sum_{i,e} \int_\Omega \sigma_{i,e}\nabla u_{i,e} \cdot \nabla \hat{u}_{i,e} \; \d x + \frac{\theta \gamma}{\eta} \int_\Omega w \hat{w} \; \d x + \theta \int_\Omega (w\hat{u} - u\hat{w}) \; \d x \\
&= \sum_{i,e} \int_\Omega s_{i,e} \hat{u}_{i,e} \;\d x + \sum_{i,e} {}_{H^{-1/2}(\Gamma)}\langle g_{i,e}, \hat{u}_{i,e}\rangle_{H^{1/2}(\Gamma)}, \\
&\int_\Omega (u(0) \hat{u} + \frac{\theta}{\eta} w(0)\hat{w}) \;\d x = \int_\Omega (u_0 \hat{u} + \frac{\theta}{\eta} w_0\hat{w}) \;\d x, 
\end{align*}
for a.e. $t\in (0,\cT)$ and all $\hat{u}_{i,e} \in V\times V$ with $\int_\Omega \hat{u}_e \; \d x=0$ and $\hat{u}=\hat{u}_i -\hat{u}_e$ and $\hat{w} \in H$. 

Moreover if $u_0 \in V, u_0f(u_0) \in L^1(\Omega)$, then 
\begin{align*}
u_{i,e} \in C([0,\cT]; V) ,~\partial_t u \in L^2(0,\cT; H),~w \in C([0,\cT]; V). 
\end{align*}
\end{theorem}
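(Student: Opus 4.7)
The plan is to recast \eqref{Eq: BDEII} as a degenerate abstract evolution equation on $V_{av}\times V_{av}\times H$ in the unknowns $(u_i,u_e,w)$---degenerate because only the transmembrane difference $u=u_i-u_e$ carries a time derivative while $u_e$ is fixed up to its mean by an elliptic constraint---and to solve it by a Galerkin scheme. Concretely, I would use an orthonormal basis $\{\psi_j\}_{j\ge 1}$ of $H$ (for instance the eigenfunctions of $A_i+A_e$ on $V_{av}$ together with the constant function) and look for finite-dimensional approximations $u_{i,e}^n, w^n$ solving the projected weak formulation. Existence of the approximations reduces to a locally Lipschitz ODE on the coefficient space via the continuity of $f$; extension to all of $[0,\cT]$ follows from the a priori estimate below.

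For the energy estimate I would test the sum of the $(i)$ and $(e)$ equations with $\hat u_{i,e}=u^n_{i,e}$ and the gating ODE with $(\theta/\eta)w^n$. The cross-coupling terms $\theta\int(w\hat u-u\hat w)\,\d x$ cancel, and the hypothesis \eqref{assumption(f)} combined with $f(0)=0$ gives $f(u)u\ge-\lambda_f u^2$. Together with the coercivity of $a_i,a_e$ on $V_{av}$ this produces, after Gronwall, uniform bounds on $u^n_{i,e}$ in $L^\infty(0,\cT;H)\cap L^2(0,\cT;V)$, on $w^n$ in $L^\infty(0,\cT;H)$, and on $\int_\Omega F(u^n)\,\d x$ in $L^\infty(0,\cT)$, where $F$ is a primitive of $f$. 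From the equation one then extracts $\partial_t u^n\in L^2(0,\cT;V')$, so Aubin-Lions yields strong convergence $u^n\to u$ in $L^2(0,\cT;H)$ and pointwise a.e., whence $f(u^n)\to f(u)$ a.e.\ on $(0,\cT)\times\Omega$. The $L^\infty(L^1)$ bound on $F(u^n)$ then upgrades this to $L^1$ convergence via a Vitali argument, which lets me pass to the limit in the weak formulation. The local estimate $\partial_t u\in L^2_{loc}(0,\cT;H)$ is obtained by testing with $\partial_t u^n$ against a time cut-off, exploiting $f(u^n)\partial_t u^n=\partial_t F(u^n)$.

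Uniqueness follows by subtracting two solutions, testing with $(u_1-u_2,w_1-w_2)$, and using that $f+\lambda_f\,\Id$ is monotone so the nonlinear contribution has a sign up to a $\lambda_f$-Gronwall term. For the higher regularity under $u_0\in V$ and $u_0f(u_0)\in L^1(\Omega)$, one tests the Galerkin equation with $\partial_t u^n$ on the whole interval and uses that $\int_0^t\int_\Omega f(u^n)\partial_t u^n\,\d x\,\d \tau = \int_\Omega F(u^n(t))\,\d x-\int_\Omega F(u_0)\,\d x$ together with the bound $F(u_0)\in L^1(\Omega)$ (which follows from the hypothesis via \eqref{assumption(f)}) to obtain uniform control of $\partial_t u^n$ in $L^2(0,\cT;H)$ and of $u^n_{i,e}$ in $L^\infty(0,\cT;V)$; continuity in $V$ then follows from standard parabolic embeddings. \textbf{The main obstacle} is the degeneracy: since the elliptic operator is not coercive on $H$ but only on $V_{av}$, one cannot treat $u$ as an abstract Cauchy problem on $H$ and must instead work on the full pair $(u_i,u_e)$ under the mean-zero constraint on $u_e$. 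The data compatibility $\int_\Omega (s_i+s_e)\,\d x+{}_{H^{-1/2}(\Gamma)}\langle g_i+g_e,1\rangle_{H^{1/2}(\Gamma)}=0$ is exactly what makes this constraint consistent in time, and the $W^{1,1}$ regularity in time of $g_{i,e}$ is needed when integrating by parts in the boundary term to pass to the limit.
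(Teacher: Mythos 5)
First, note that the paper itself gives no proof of this statement: it is quoted as a black box from Colli Franzone and Savar\'e \cite{MR1944157}, and the only methodological hint offered is that those authors recast \eqref{Eq: BDEII} as a \emph{degenerate} variational evolution equation $(Bu)'+Au+\cF u=L$ and apply an abstract well-posedness and regularity theory for such systems, built on the $\lambda$-convexity of a primitive $F$ of $f$ rather than on a Galerkin basis. Your proposal is therefore a genuinely different, more hands-on route. Several of its ingredients are exactly right and mirror what the original reference exploits: the cancellation of the cross terms $\theta\int_\Omega(w\hat u-u\hat w)\,\d x$ under the symmetric test choice, the lower bound $f(u)u\ge-\lambda_f u^2$ from \eqref{assumption(f)} with $f(0)=0$, uniqueness via monotonicity of $f+\lambda_f\operatorname{Id}$ plus Gronwall, and the identity $\int_\Omega f(u)\partial_t u\,\d x=\tfrac{\d}{\d t}\int_\Omega F(u)\,\d x$ for the higher regularity (where $u_0f(u_0)\in L^1(\Omega)$ indeed yields $F(u_0)\in L^1(\Omega)$ by $\lambda$-convexity).

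There is, however, a concrete gap at the compactness step. Under the hypotheses of the theorem $f$ is only continuous with the one-sided bound \eqref{assumption(f)}; no polynomial growth is assumed (cubic growth enters only in Proposition~\ref{Prop: Strong solution}). Hence the energy estimate controls $\int_0^{\cT}\!\!\int_\Omega f(u^n)u^n$ but gives no bound on $f(u^n)$ in $L^2(0,\cT;V')$ or in any $L^{p'}$, so you cannot read off a uniform bound on $\partial_t u^n$ in $L^2(0,\cT;V')$ ``from the equation'', and your Aubin--Lions step collapses as written. Likewise the claimed $L^\infty(0,\cT;L^1(\Omega))$ bound on $F(u^n)$ is not available for data $u_0\in H$ only; the first energy estimate yields only $\int_0^{\cT}\!\!\int_\Omega F(u^n)\,\d x\,\d t\le C$ (again via $\lambda$-convexity), while the pointwise-in-time control of $\int_\Omega F(u^n(t))\,\d x$ requires testing with $\partial_t u^n$ and hence either $u_0\in V$ with $F(u_0)\in L^1(\Omega)$ or a time weight. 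The repair is to reorder your argument: run the cut-off estimate first to obtain $\sqrt{t}\,\partial_t u^n$ bounded in $L^2(0,\cT;H)$ and $t\mapsto t\int_\Omega F(u^n(t))\,\d x$ bounded, deduce compactness of $u^n$ in $L^2(\delta,\cT;H)$ for every $\delta>0$ and a.e.\ convergence by a diagonal argument, and then obtain equi-integrability of $f(u^n)$ not from $F$ but from the bound on $\int f(u^n)u^n$ applied to the nondecreasing function $u\mapsto f(u)+\lambda_f u$ (so that $|f(u^n)|\le k^{-1}\bigl(f(u^n)u^n+\lambda_f|u^n|^2\bigr)+\lambda_f|u^n|$ on $\{|u^n|\ge k\}$); Vitali then passes the limit in the nonlinearity. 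One further caveat: your Galerkin scheme on the full pair $(u_i,u_e)$ leaves unclear how the purely elliptic (non-evolutionary) component is enforced at the discrete level; either eliminate $u_e$ by the elliptic solve $(A_i+A_e)u_e=\cdots$, as in the construction of the bidomain operator in Section~\ref{Sec: Preliminaries}, or work directly in the degenerate abstract framework of \cite{MR1944157}, which handles this in one stroke.
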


Furthermore they derived the regularity results. 

\begin{proposition}\label{Prop: Strong solution}
In addition to the assumption in the theorem, suppose that $d=3$, and the nonlinear term $f$ has a cubic growth at infinity, i.e., 
\begin{align*}
0 < \liminf_{|r|\to \infty} \frac{f(r)}{r^3} \le \limsup_{|r|\to \infty} \frac{f(r)}{r^3} < +\infty. 
\end{align*}
Then the bidomain equation admits a unique strong solution $u_{i,e}, u, w$. 
Moreover, it satisfies 
\begin{align*}
-\divergence (\sigma_{i,e}\nabla u_{i,e}) \in L^2(0,\cT; H)
\end{align*}
\end{proposition}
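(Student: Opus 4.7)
The plan is to upgrade the weak solution of the preceding theorem to a strong one by exploiting the cubic growth of $f$ together with the critical Sobolev embedding $V = H^1(\Omega) \hookrightarrow L^6(\Omega)$ available in dimension $d = 3$. Cubic growth yields $|f(u)|^2 \lesssim 1 + |u|^6$, so once $u$ is controlled in $V$ the nonlinearity $f(u)$ automatically lies in $H$; thereafter the claimed $L^2(0,\cT;H)$ regularity of $\divergence(\sigma_{i,e}\nabla u_{i,e})$, and hence the strong-solution statement, are obtained by reading off the two elliptic equations and invoking elliptic regularity.

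First I would approximate the initial data by $u_0^{(n)} \in V$ with $u_0^{(n)}f(u_0^{(n)}) \in L^1(\Omega)$, so that the regularity clause of the theorem produces a sequence of weak solutions with $u_{i,e}^{(n)} \in C([0,\cT];V)$ and $\partial_t u^{(n)} \in L^2(0,\cT;H)$. Then I would derive a uniform higher energy identity by testing the first bidomain equation against $\partial_t u_i^{(n)}$ and the second against $-\partial_t u_e^{(n)}$, summing, and using $\partial_t u_i - \partial_t u_e = \partial_t u$, Green's formula on each elliptic term, and the chain rule
\begin{align*}
\int_\Omega f(u)\,\partial_t u\,\d x = \frac{\d}{\d t}\int_\Omega F(u)\,\d x, \qquad F(s) := \int_0^s f(r)\,\d r.
\end{align*}
This yields, schematically,
\begin{align*}
\|\partial_t u\|_H^2 + \frac{\d}{\d t}\Big(\tfrac12 a_i(u_i,u_i) + \tfrac12 a_e(u_e,u_e) + \int_\Omega F(u)\,\d x\Big) = \Phi(t),
\end{align*}
where $\Phi$ collects source and boundary terms, the latter reduced via one integration by parts in time using $g_{i,e}\in W^{1,1}(0,\cT;H^{-1/2}(\Gamma))$ and the ODE for $w$ being used to trade $\theta w\,\partial_t u$ for controlled quantities. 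Since $|F(u_0^{(n)})| \lesssim 1 + |u_0^{(n)}|^4$ is integrable from $u_0^{(n)} \in V \hookrightarrow L^6(\Omega)$, Gronwall yields uniform bounds on $\|u_{i,e}^{(n)}\|_{L^\infty(0,\cT;V)}$ and $\|\partial_t u^{(n)}\|_{L^2(0,\cT;H)}$.

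Weak compactness together with lower semicontinuity of the shifted convex functional $u \mapsto \int_\Omega F(u)\,\d x + \tfrac{\lambda_f}{2}\|u\|_H^2$ (convex by~\eqref{assumption(f)}) transfers these bounds to the limit $(u,w,u_{i,e})$. Rearranging the first two bidomain equations then gives
\begin{align*}
-\divergence(\sigma_i\nabla u_i) = s_i - \partial_t u - f(u) - \theta w, \qquad \divergence(\sigma_e\nabla u_e) = -s_e - \partial_t u - f(u) - \theta w,
\end{align*}
and each right-hand side lies in $L^2(0,\cT;H)$, which is the announced regularity. Since $\partial\Omega\in C^2$ and $\sigma_{i,e}\in C^1(\overline\Omega)$ is uniformly elliptic, standard $H^2$-regularity for the Neumann problem (with traces $g_{i,e}\in H^{1/2}(\Gamma)$) upgrades $u_{i,e}$ to $L^2(0,\cT;H^2(\Omega))$. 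Uniqueness follows by testing the difference of two solutions against $(u-\tilde u,\,w-\tilde w)$ and invoking~\eqref{assumption(f)} to close a Gronwall estimate.

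The main obstacle is the rigorous justification of the higher energy identity: because $\partial_t$ only sees the combination $u_i - u_e$, the test functions $\partial_t u_{i,e}^{(n)}$ are not a priori admissible in the weak formulation, so the computation above must be performed within a Galerkin scheme adapted to the bidomain bilinear form (where $\partial_t u_{i,e}^{(n)}$ are automatically smooth linear combinations of eigenfunctions) or via time difference quotients, and only then passed to the limit. Cubic growth of $f$ is precisely what keeps all nonlinear error terms controllable uniformly in $n$.
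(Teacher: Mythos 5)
The paper does not prove this proposition: it is quoted verbatim from Colli Franzone and Savar\'e \cite{MR1944157} (``Furthermore they derived the regularity results''), so there is no in-paper argument to compare yours against. Measured against what the cited result actually requires, your sketch contains the right decisive ingredient, namely that cubic growth of $f$ together with $H^1(\Omega)\hookrightarrow L^6(\Omega)$ in $d=3$ and the bound $u\in C([0,\cT];V)$ give $f(u)\in L^\infty(0,\cT;H)$, after which $-\divergence(\sigma_{i,e}\nabla u_{i,e})=s_{i,e}\mp(\partial_t u+f(u)+\theta w)\in L^2(0,\cT;H)$ is simply read off from the two equations. Note, however, that the bulk of your proposal --- the higher energy identity obtained by testing with $\partial_t u_{i,e}$ --- re-derives regularity that is already contained in the ``Moreover'' clause of the quoted theorem ($u_{i,e}\in C([0,\cT];V)$, $\partial_t u\in L^2(0,\cT;H)$ under $u_0\in V$, $u_0f(u_0)\in L^1(\Omega)$); in the reference this step is carried out through the abstract theory of degenerate evolution equations governed by subdifferentials of convex functionals rather than a Galerkin computation, but the resulting estimate is the same, so this part is redundant rather than wrong.

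The one genuine flaw is your opening approximation step. If you start from general $u_0\in H$ and approximate by $u_0^{(n)}\in V$ with $u_0^{(n)}f(u_0^{(n)})\in L^1(\Omega)$, the Gronwall bound you derive depends on $\|u_0^{(n)}\|_V$ and $\int_\Omega F(u_0^{(n)})\,\d x$, and these blow up as $n\to\infty$ unless $u_0$ already lies in $V$ with $F(u_0)\in L^1(\Omega)$. So no uniform-in-$n$ bound is available and the limit passage fails; strong regularity up to $t=0$ simply cannot hold for arbitrary $H$ data. The proposition must be read under the additional hypotheses $u_0\in V$ and $u_0f(u_0)\in L^1(\Omega)$, which is exactly how the paper later applies it: the weak periodic solution is restarted at a time $t_0$ chosen so that $v(t_0)\in V$ and $v(t_0)f(v(t_0))\in L^1(\Omega)$. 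With that hypothesis in place the approximation is unnecessary and the rest of your argument goes through.
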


\begin{remark}
Let $\Omega$ be of class $C^{1,1}$, $\sigma_{i,e}$ be Lipschitz in $\Omega$ and $g_{i,e} \in L^2(0,\cT; H^{1/2}(\Gamma))$. 
Then by the standard regularity theorem, we see 
\begin{align*}
u_{i,e}\in L^2(0,\cT; H^2(\Omega)). 
\end{align*}
\end{remark}

\begin{remark}
If we look at the function $f$ of the FitzHugh--Nagumo nonlinearity introduced in Section~\ref{Sec: Introduction} as $f(u,w)= f(u)+w= u(u-a)(u-1)+w$, then the function $f(u)$ satisfies the assumptions for the nonlinearity in Proposition~\ref{Prop: Strong solution} as well as Assumption~\eqref{assumption(f)}.
\end{remark}


Now, we combine the results from the previous sections to obtain a strong time-periodic solution for the bidomain equations with FitzHugh--Nagumo type nonlinearities subject to arbitrary large forces. 
We would like to identify our {\it weak} time-periodic solution $(v,z)$ constructed in Section~\ref{Sec: Weak periodic solution} with a {\it strong} solution $(u,w)$ to the initial value problem with initial data $v(t_0), z(t_0)$ for some $t_0 > 0$ satisfying $v(t_0) \in V$ and $f(v(t_0))v(t_0) \in L^1(\Omega)$. 
Since $f$ has cubic growth, i.e., $p=4$ in the Assumption (\ref{Ass: nonlinear}), $v\in L^4(Q)$ derives the existence of $t_0$. 
So we can use the theorem by Colli-Franzone and Savar\'e for the global strong solution with the initial values $v(t_0)$, $z(t_0)$. 
Finally, we show that the weak solution $(v,z)$ coincides with the strong solution $(u,w)$ and therefore obtain the existence of a strong time-periodic solution. 
We follow the approach given in~\cite{GHK}

To be more precise, for given $T$-time-periodic functions $s_{i,e}\in L^2(0,\cT; H)$ with $s_i + s_e \in W^{1,1}(0, \cT; H)$ and $\int_\Omega (s_i + s_e) \; \d x= 0$ for a.e. $t$, let $(v,z)$ be a weak $T$-time-periodic solution of \eqref{Eq: ABDE} for $s=s_i - A_i(A_i+A_e)^{-1}(s_i+s_e)$ $(\in L^2(\IT; H))$  corresponding to Theorem \ref{Thm: Weak periodic solution}. 
We take $t_0$ such that $v(t_0) \in V$ and $v(t_0)f(v(t_0))\in L^1(\Omega)$. 
Since $(v,z)$ is a weak $T$-time-periodic solution, it satisfies that for all $\varphi_1 \in W^{1,2}(t_0,\cT; H) \cap L^2(t_0,\cT; V)\cap L^4(Q)$ and all $\varphi_2 \in W^{1,2}(t_0,\cT; H)$
\begin{align}
\int_{t_0}^t \{(v,\partial_t \varphi_1) - a(v, \varphi_1) - ( f(v,z),\varphi_1)  \} \; \d \tau &= - \int_{t_0}^t (s(\tau), \varphi_1(\tau)) \;\d \tau +(v(t),\varphi_1(t)) - (v(t_0),\varphi_1(t_0)),  \label{Eq: weak1}\\
\int_{t_0}^t \{(z,\partial_t \varphi_2) - (g(w,z) , \varphi_2) \} \; \d \tau &= (z(t),\varphi_2(t)) - (z(t_0),\varphi_2(t_0)),\label{Eq: weak2}
\end{align}
for all $t \in (t_0,\cT)$, and $(v,z)$ satisfies the following strong energy inequality: 
\begin{align}\label{Eq: strong energy inequality}
&\left(\|v(t)\|_H^2 + \|z(t)\|_H^2\right) + 2 \int_{t_0}^t a(v(\tau) , v(\tau)) \; \d \tau+ 2 \int_{t_0}^t \int_{\Omega} f(v(\tau),z(\tau))v(\tau) + g(v(\tau),z(\tau)) z(\tau) \; \d x \;\d \tau\nonumber \\
\le & \|v(t_0)\|_H^2 + \|z(t_0)\|_H^2 + 2\int_{t_0}^t (s(\tau), v(\tau)) \; \d \tau, 
\end{align}
for all $t\in[t_0, \cT]$. 

We next consider the unique global strong solution $(u,w)\in \left(W^{1,2}(t_0,\cT; H) \cap L^2(0,\cT; H^2(\Omega))\right) \times C^1([0,\cT]; H)$ corresponding to the initial-boundary value problem for the bidomain equation with initial value $(v(t_0), z(t_0))$ and $T$-periodic right-hand side $s_{i,e}$ and $g_{i,e}=0$. 
In the following, we show that the weak solution $(v,z)$ agrees with the strong solution $(u,w)$. 

Since $(u,w)$ is a strong solution, it satisfies that for all $\cT>t_0$ and all $\phi_1 \in W^{1,2}(t_0,\cT; H) \cap L^2(t_0,\cT; V)\cap L^4(Q)$ and all $\phi_2 \in W^{1,2}(t_0,\cT; H)$
\begin{align}
\int_{t_0}^t \{(u,\partial_t \phi_1) - a(u, \phi_1) - (f(u,w),\phi_1) \} \; \d \tau &= - \int_{t_0}^t ( s(\tau), \phi_1(\tau)) \;\d \tau +(u(t),\phi_1(t)) - (v(t_0),\phi_1(0)), \label{Eq: strong1} \\
\int_{t_0}^t \{(w,\partial_t \phi_2) - (g(u,w), \phi_2)\} \; \d \tau &= (w(t),\phi_2(t)) - (z(t_0),\phi_2(t_0)),\label{Eq: strong2}
\end{align}
for all $t \in (t_0,\cT)$, and $(u,w)$ satisfies the following strong energy identity: 
\begin{align}\label{Eq: strong energy equality}
&\left(\|u(t)\|_H^2 + \|w(t)\|_H^2\right) + 2 \int_{t_0}^t a(u(\tau) , u(\tau)) \; \d \tau+ 2 \int_{t_0}^t \int_{\Omega} f(u(\tau),w(\tau))u(\tau) + g(u(\tau),w(\tau))w(\tau) \; \d x \;\d \tau  \nonumber\\
= & \|v(t_0)\|_H^2 +\|z(t_0)\|_H^2+ 2\int_{t_0}^t (s(\tau), u(\tau))  \; \d \tau. 
\end{align}

Next, denote by
\begin{align*}
&v_h(t) := \int _0^{\cT} j_h(t-\tilde{t})v(\tilde{t}) \; \d \tilde{t}, \quad z_h(t) := \int_0^{\cT} j_h(t-\tilde{t}) z(\tilde{t}) \; \d \tilde{t}, \\
&u_h(t) := \int_0^{\cT} j_h(t-\tilde{t}) u(\tilde{t}) \; \d \tilde{t}, \quad w_h(t) := \int_0^{\cT} j_h(t-\tilde{t}) w(\tilde{t}) \; \d \tilde{t}
\end{align*}
the (Friedrichs) time-mollifier of $v$, $z$, $u$, and $w$, respectively, where $j_h \in \C_c^{\infty} (-h, h)$, $0<h<\cT$, is even and positive with $\int_{\IR} j_h(\tilde{t}) \; \d \tilde{t} =1$. 
Then, as is well known,
\begin{align}
\lim_{h \rightarrow 0} \int_0^{\cT} \| v_h(\tau) -v(\tau) \|_V^2 \; \d \tau = 0, \quad & \Esup_{t\in [0,\cT]} \| v_h(t)\|_2 \leq \Esup_{t\in [0, \cT]} \|v(t)\|_2, \\
\lim_{h \rightarrow 0} \int_0^{\cT} \| u_h(\tau) -u(\tau) \|_{H^2}^2 \; \d \tau = 0, \quad & \Esup_{t\in [0,\cT]} \| u_h(t)\|_V \leq \Esup_{t\in [0, \cT]} \|u(t)\|_V,\\
\lim_{h \rightarrow 0} \int_0^{\cT} \| z_h(\tau) -z(\tau) \|_H^2 \; \d \tau = 0, \quad & \lim_{h \rightarrow 0} \int_0^{\cT} \| w_h(\tau) -w(\tau) \|_{H}^2 \; \d \tau = 0. \quad & 
\end{align}
The weak continuity of $v$ and $u$ implies
\begin{align}
&\lim_{h \rightarrow 0} (u(t), v_h(t)) = \lim_{h \rightarrow 0} (u_h(t),v(t)) = (u(t), v(t)), \ t\geq t_0, \label{Eq: weak continuity u v}\\
&\lim_{h \rightarrow 0} (w(t), z_h(t)) = \lim_{h \rightarrow 0} (w_h(t),z(t)) = (w(t), z(t)), \ t\geq t_0. \label{Eq: weak continuity w z}
\end{align}
Furthermore since  
\begin{align*}
&\int_{t_0}^t (v,\partial_t u_h)\; \d \tau = - \int_{t_0}^t (u_h, \partial_t v)\; \d \tau + (u_h(t),v(t)) - (u_h(t_0), v(t_0)), \\
&\int_{t_0}^t (z, \partial_t w_h)\; \d \tau = - \int_{t_0}^t (w_h, \partial_t z)\; \d \tau + (w_h(t),z(t)) - (w_h(t_0), z(t_0)),  
\end{align*}
by taking the limit, 
\begin{align*}
&\lim_{h\to 0} \left\{\int_{t_0}^t (v,\partial_t u_h) +  (u_h, \partial_t v)\; \d \tau\right\} =  (u(t),v(t)) - \|v(t_0)\|_H^2 \\
&\lim_{h\to 0} \left\{\int_{t_0}^t (z, \partial_t w_h) + (w_h, \partial_t z)\; \d \tau \right\} =  (w(t),z(t)) - \|z(t_0)\|_H^2. 
\end{align*}

We now replace $\varphi_1$ by $u_h$ in~\eqref{Eq: weak1}, $\varphi_2$ by $w_h$ in~\eqref{Eq: weak2}, $\phi_1$ by $v_h$ in~\eqref{Eq: strong1}, and $\phi_2$ by $z_h$ in~\eqref{Eq: strong2}. 
Then, we sum up the resulting equations to obtain
\begin{align}
&\int_{t_0}^t \{-2 a(u,v) -  (f(v,z),u) -  (f(u,w),v) - (g(v,z),w) - (g(u,w),z)\}\; \d \tau \nonumber\\
= &- \int_{t_0}^t (s(\tau), u(\tau)+v(\tau)) \; \d \tau + (u(t),v(t))-\|v(t_0)\|_H^2+ (w(t),z(t))-\|z(t_0)\|_H^2. \label{crossterm}
\end{align}

To prove $(u,w)=(v,z)$, we calculate 
\begin{align*}
&\|u(t)-v(t)\|_H^2 + \|w(t)-z(t)\|_H^2 + 2 \int_{t_0}^t a(u(\tau)-v(\tau), u(\tau)-v(\tau)) \; \d \tau\\
=&\left(\|u(t)\|_H^2 + \|w(t)\|_H^2 + 2 \int_{t_0}^t a(u(\tau),u(\tau)) \; \d \tau\right) + \left(\|v(t)\|_H^2 + \|z(t)\|_H^2 + 2 \int_{t_0}^t a(v(\tau),v(\tau)) \; \d \tau\right)\\
&\hspace{5mm} - 2 (u(t), v(t)) - 2 (w(t),z(t)) - 4 \int_{t_0}^t a(u(\tau), v(\tau)) \; \d \tau. 
\end{align*}
For the first two parts, we use the strong energy equality~\eqref{Eq: strong energy equality} and  the strong energy inequality~\eqref{Eq: strong energy inequality}, and for the last term, we use the relation~(\ref{crossterm}). 
Then, we have 
\begin{align*}
&\|u(t)-v(t)\|_H^2 +  \|w(t)-z(t)\|_H^2 + 2 \int_{t_0}^t a(u(\tau)-v(\tau), u(\tau)-v(\tau)) \; \d \tau\\
&\le  2 \int_{t_0}^t \{(f(v,z),u)+(f(u,w),v)-(f(u,w),u)-(f(v,z),v)   \\
&\qquad \ \ \ + (g(v,z),w) + (g(u,w),z) - (g(u,w),w) - (g(v,z),z) \}\;\d\tau \\
\le & - 2 \int_{t_0}^t (f(u,w)-f(v,z), u-v) + (g(u,w)-g(v,z), w-z) \;\d\tau 
\end{align*}
Here, for the first term we use the Assumption~(\ref{assumption(f)}) and Young's inequality to get 
\begin{align*}
- 2 \int_{t_0}^t (f(u,w)-f(v,z), u-v)\;\d\tau &\le 2 \lambda_f \int_{t_0}^t \|u(\tau)-v(\tau)\|_H^2 \; \d \tau - 2\int_{t_0}^t (w-z, u-v) \: \d \tau \\
&\le 2 \lambda_f \int_{t_0}^t \|u(\tau)-v(\tau)\|_H^2 \; \d \tau + \int_{t_0}^t \varepsilon_1 \|w-z\|_H^2 +C(\varepsilon_1) \|u-v\|_H^2 \: \d \tau
\end{align*}
for some constants $\varepsilon_1$, $C(\varepsilon_1) >0$.
On the other hand since the function $g(u,w)=-\varepsilon(ku-w)$ is linear, 
\begin{align*}
\left|(g(u,w)-g(v,z), w-z)\right| \le C (\|u-v\|_H^2 + \|w-z\|_H^2). 
\end{align*}
for some $C>0$. 
Therefore, we have 
 \begin{align*}
 &\|u(t)-v(t)\|_H^2 +  \|w(t)-z(t)\|_H^2 + 2 \int_{t_0}^t a(u(\tau)-v(\tau), u(\tau)-v(\tau)) \; \d \tau\\
\le & C\int_{t_0}^t (\|u(\tau)-v(\tau)\|_H^2 + \|w(\tau)-z(\tau)\|_H^2 \; \d\tau, 
\end{align*}
for some $C>0$, which is different from the previous constant. 

Hence, we are able to apply Gronwall's lemma to conclude that 
\begin{align*}
u-v \equiv0, w-z\equiv0~{\rm a.e.~in~}\Omega\times [t_0, \cT]. 
\end{align*}
This implies the existence of a strong $T$-time-periodic solution $(u,w)$ when the source term $s_{i,e}$ is a $T$-time-periodic function. 

We write down the main theorem of the existence of strong periodic solutions without assuming smallness conditions for the external forces. 

\begin{theorem}\label{Thm: Maintheorem}
Let $d=3$, $T>0$, and $s_{i,e} \in L^2(\IT;H)$ with $s_i + s_e \in W^{1,1}(\IT; H)$ and $\int_\Omega (s_i + s_e) \; \d x=0$ for a.e. $t$. 
Let the conductivity matrices $\sigma_{i,e}$ satisfy the Assumption \ref{Ass: Conductivity matrices} and the nonlinear term $f$ satisfy the  Assumption~(\ref{assumption(f)}) and assume that there exist constants $C_0\in \IR$ and $C_1>1$ such that 
\begin{align}
C_0 + C_1|u|^4 \le f(u)u \label{lower2}
\end{align}
for all $u\in \IR$. 
Then for the bidomain equations with FitzHugh--Nagumo type 
\begin{align*}
\left\{
\begin{aligned}
\partial_{t}u - \divergence (\sigma_i \nabla u_i) + f(u)+ w&=s_i, \qquad &\mathrm{in} \ (0,\infty)\times\Omega,\\
\partial_{t}u + \divergence (\sigma_e \nabla u_e) + f(u)+ w&=-s_e, \qquad &\mathrm{in} \ (0,\infty)\times\Omega,\\
u &= u_i - u_e, \qquad &\mathrm{in} \ (0,\infty)\times\Omega,\\
\partial_{t}w -\varepsilon(ku -w) &=0, &\mathrm{in} \ (0,\infty)\times\Omega,\\
\sigma_i \nabla u_i \cdot \nu = 0, ~\sigma_e \nabla u_e\cdot \nu &= 0, &\mathrm{in} \ (0,\infty)\times\partial\Omega,\\
u (0)= u_0,~& ~w(0)= w_0, &\mathrm{in}\ \Omega,  \\
\end{aligned}
\right.
\end{align*}
there exists a strong $T$-periodic solution 
\begin{align*}
(u_i, u_e) \in (W^{1,2}(\IT;H) \cap L^2(\IT; H^2(\Omega))^2~{\mbox with~}\int_\Omega u_e \; \d x=0~{\rm a.e.}~t\\
(u,w) \in (W^{1,2}(\IT;H) \cap L^2(\IT; H^2(\Omega)) \cap L^4(\IT\times\Omega)) \times C^1(\IT; H). 
\end{align*}
\end{theorem}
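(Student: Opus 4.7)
The plan is to combine the three ingredients already assembled in the paper: the existence of weak $T$-periodic solutions given by Theorem~\ref{Thm: Weak periodic solution}, the global strong well-posedness of the initial-boundary value problem due to Colli Franzone and Savar\'e (Proposition~\ref{Prop: Strong solution}), and a weak--strong uniqueness argument performed via time-mollification. The overall scheme is to produce a weak periodic solution, pick a sufficiently regular initial time $t_0$, build a global strong solution starting from the Cauchy data at $t_0$, and finally identify the two solutions on $[t_0, \cT]$.

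First, I would apply Theorem~\ref{Thm: Weak periodic solution} to the modified source $s := s_i - A_i(A_i + A_e)^{-1}(s_i + s_e)$, which belongs to $L^2(\IT; H) \subset L^2(\IT; V')$ under the hypotheses of the theorem (note that the conservation of currents assumption $\int_\Omega (s_i + s_e)\,\d x = 0$ ensures $s$ is well-defined). This yields a weak $T$-periodic pair $(v,z)$ with $v \in C_w(\IT; H) \cap L^2(\IT; V) \cap L^4(Q)$ and $z \in C_w(\IT; H)$; here the exponent $p = 4$ of Assumption~\ref{Ass: nonlinear} is enforced by the coercivity~\eqref{lower2} of $f$. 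Since $\int_0^T \|v(\tau)\|_V^2\, \d \tau$ and $\int_Q v f(v)\, \d x\, \d\tau$ are both finite, the absolute continuity of the Lebesgue integral furnishes a time $t_0 \in (0, T)$ at which $v(t_0) \in V$ and $v(t_0) f(v(t_0)) \in L^1(\Omega)$, so that $(v(t_0), z(t_0))$ is admissible initial data for the strong theory.

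Second, I would apply Proposition~\ref{Prop: Strong solution} together with the $H^2$-regularity remark on $[t_0, \cT]$ with initial data $(v(t_0), z(t_0))$ and the $T$-periodic forces $s_{i,e}$. The hypotheses on $f$ in the theorem (one-sided Lipschitz via~\eqref{assumption(f)} and cubic growth via~\eqref{lower2}) together with the $C^2$ (hence $C^{1,1}$) regularity of $\partial\Omega$ and Assumption~\ref{Ass: Conductivity matrices} deliver a unique global strong solution $(u,w)$ with $u \in W^{1,2}(t_0, \cT; H) \cap L^2(t_0, \cT; H^2(\Omega))$, $u_{i,e} \in L^2(t_0, \cT; H^2(\Omega))$ and $w \in C^1([t_0, \cT]; H)$, satisfying the strong energy identity~\eqref{Eq: strong energy equality}.

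The heart of the argument, and the step I expect to be the main obstacle, is the identification $(u,w) = (v,z)$ on $[t_0, \cT]$; one cannot test the weak equation with $u$ nor the strong equation with $v$ directly, so the cross terms must be produced symmetrically. Following the scheme of Section~\ref{Sec: Strong periodic solution}, I would use the Friedrichs time-mollifications $u_h, w_h, v_h, z_h$ as test functions in~\eqref{Eq: weak1}--\eqref{Eq: weak2} and~\eqref{Eq: strong1}--\eqref{Eq: strong2}, let $h \downarrow 0$ using the weak-continuity identities~\eqref{Eq: weak continuity u v}--\eqref{Eq: weak continuity w z}, and add the result to the strong energy inequality~\eqref{Eq: strong energy inequality} for $(v,z)$ and the energy equality~\eqref{Eq: strong energy equality} for $(u,w)$. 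The nonlinear error splits in two: the $f$-piece is controlled by the one-sided Lipschitz bound~\eqref{assumption(f)}, producing a spurious $(w - z, u - v)$ term that is absorbed via Young's inequality, while the $g$-piece is trivial because $g(u,w) = -\varepsilon(ku - w)$ is affine. This yields a Gronwall inequality for $\|u - v\|_H^2 + \|w - z\|_H^2$ whose only solution is zero, so $(u,w) \equiv (v,z)$ a.e.\ on $[t_0, \cT] \times \Omega$. The pair $(v,z)$ then inherits the full strong regularity on $[t_0, \cT]$, and $T$-periodicity propagates it to the whole torus $\IT$; the potentials $u_i, u_e$ are recovered via the formulas at the end of Section~\ref{Sec: Preliminaries}, completing the proof.
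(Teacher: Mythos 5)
Your proposal is correct and follows essentially the same route as the paper: a weak $T$-periodic solution from Theorem~\ref{Thm: Weak periodic solution} with the modified source $s$, selection of an admissible time $t_0$ using $v\in L^2(\IT;V)\cap L^4(Q)$, the Colli Franzone--Savar\'e global strong solution launched from $(v(t_0),z(t_0))$, and identification of the two solutions by testing with Friedrichs time-mollifiers, combining the energy equality/inequality with the cross-term identity, handling $f$ via the one-sided Lipschitz bound~\eqref{assumption(f)} and $g$ via its linearity, and closing with Gronwall. No substantive differences from the paper's argument.
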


\begin{remark}
The Assumption \ref{Ass: nonlinear} of the existence of the weak periodic solutions is replaced by (\ref{lower2}). 
\end{remark}

\begin{remark}
We do not treat the ionic models by Rogers--McCulloch and Aliev-Panfilov due to the lack of a suitable global well-posedness result for the initial value problem in the $L^2$ setting.
\end{remark}

\section{Appendix}
In this appendix, we check that the three models introduced in Section \ref{Sec: Introduction} satisfy the Assumption \ref{Ass: nonlinear}. 
Since the growth conditions (\ref{growth1})-(\ref{growth3}) are trivial as $p=4$, we confirm the condition (\ref{lower}). 
\subsection{FitzHugh--Nagumo model}
The FitzHugh--Nagumo type is 
\begin{align*}
f(u,w) &= u(u-a)(u-1) + w \\
g(u,w) &=-\varepsilon(ku - w) 
\end{align*}
with $0<a<1$ and $ k, \varepsilon>0$. Then, we are able to calculate as follows $(r=1)$: 
\begin{align*}
f(u,w)u + g(u,w)w = u^4 - (a+1)u^3 + au^2 + uw - \varepsilon k uw + \varepsilon w^2
\end{align*}
and by 
\begin{align*}
|(a+1) u^3| &\le \frac{1}{8}u^4 + c_{11},\\
|a u^2| &\le \frac{1}{8} u^4 + c_{12},\\
|uw| & \le \frac{1}{8} u^4 + \frac{\varepsilon}{4} w^2 + c_{13},\\
|\varepsilon uw| & \le \frac{1}{8}u^4 + \frac{\varepsilon}{4} w^2 + c_{14}, 
\end{align*}
for some $c_{1i}>0~(i=1,\dots,4)$, we have 
\begin{align*}
f(u,w)u + g(u,w)w \ge \frac{1}{2} u^4 + \frac{\varepsilon}{2} w^2 + c_1
\end{align*}
for some $c_1\in\IR$. 
Therefore, the FitzHugh--Nagumo model satisfies the Assumption \ref{Ass: nonlinear}. 

\subsection{Rogers--McCulloch model}
The Rogers--McCulloch type is 
\begin{align*}
f(u,w) &= bu(u-a)(u-1) + uw \\
g(u,w) &=-\varepsilon(ku - w) 
\end{align*}
with $0<a<1$ and $b,k,\varepsilon>0$. Then, we are able to calculate as follows: 
\begin{align*}
rf(u,w)u + g(u,w)w = rbu^4 - rb(a+1)u^3 + rbau^2 + ru^2w - \varepsilon k uw + \varepsilon w^2
\end{align*}
and, based on the calculation 
\begin{align*}
|ru^2w| \le \frac{C^2}{2}u^4 + \frac{r^2}{2C^2}w^2, 
\end{align*}
we choose $r,C>0$ depending on $b,\varepsilon$, such that
\begin{align*}
\begin{cases}
c_{21}:=rb - \frac{C^2}{2}>0, \\
c_{22}:=\varepsilon-\frac{r^2}{2C^2}>0. 
\end{cases}
\end{align*}
By 
\begin{align*}
|rb(a+1) u^3| &\le \frac{c_{21}}{6}u^4 + c_{23},\\
|rba u^2| &\le \frac{c_{21}}{6} u^4 + c_{24},\\
|\varepsilon kuw| & \le \frac{c_{21}}{6}u^4 + \frac{c_{22}}{2} w^2 + c_{25}, 
\end{align*}
for some $c_{2i}>0~(i=3,\dots,5)$, we have 
\begin{align*}
rf(u,w)u + g(u,w)w \ge \frac{c_{21}}{2} u^4 + \frac{c_{22}}{2} w^2 + c_2
\end{align*}
for some $c_2\in\IR$. 
Therefore, the Rogers--McCulloch model satisfies the Assumption \ref{Ass: nonlinear}. 

\subsection{Aliev--Panfilov model}
The modified Aliev--Panfilov type is 
\begin{align*}
f(u,w) &= bu(u-a)(u-1) + uw \\
g(u,w) &=\varepsilon(ku(u-1-d) + w) 
\end{align*}
with $0<a,d<1$, $b,k,\varepsilon>0$, and $b>k$. Then, we are able to calculate as follows: 
\begin{align*}
rf(u,w)u + g(u,w)w = rb u^4 + rb(a+1)u^3 + rbau^2 + ru^2 w + \varepsilon k u ^2 w - \varepsilon k (1+d) uw + \varepsilon w^2
\end{align*}
and, based on the calculation 
\begin{align*}
|(r+\varepsilon k) u^2w| \le \frac{C^2}{2}u^4 + \frac{(r+\varepsilon k)^2}{2C^2}w^2, 
\end{align*}
we choose $r, C >0$ depending on $b$, $k$, $\varepsilon$, such that
\begin{align*}
\begin{cases}
c_{31}:=rb - \frac{C^2 }{2}>0, \\
c_{32}:=\varepsilon-\frac{(r+\varepsilon k)^2}{2C^2}>0. 
\end{cases}
\end{align*}
Here, the assumption $b>k$ is essential.
By 
\begin{align*}
|rb(a+1) u^3| &\le \frac{c_{31}}{6}u^4 + c_{33},\\
|rba u^2| &\le \frac{c_{31}}{6} u^4 + c_{34},\\
|\varepsilon k(1+d)uw| & \le \frac{c_{31}}{6}u^4 + \frac{c_{32}}{2} w^2 + c_{35}, 
\end{align*}
for some $c_{3i}>0~(i=3,\dots,5)$, we have 
\begin{align*}
rf(u,w)u + g(u,w)w \ge \frac{c_{31}}{2} u^4 + \frac{c_{32}}{2} w^2 + c_3
\end{align*}
for some $c_3\in\IR$. 
Note that we are not able to take $c_{3i}~(i=1,2)$ in the case $b=k$. 
Therefore, the modified Aliev-Panfilov model satisfies the Assumption \ref{Ass: nonlinear}.

\begin{bibdiv}
\begin{biblist}

\bib{AP}{article}{
author={Aliev, R.}, 
author={Panfilov, A.},
title={A simple two-variable model of cardiac excitation}, 
date={1996}, 
journal={Chaos, Solitions \& Fractals.}, 
volume={7}, 
number={3}, 
pages={293\ndash301}, 
} 

\bib{BC}{unpublished}{
author={Belhachmi, Z.}, 
author={Chill, R.}, 
title={The bidomain problem as a gradient system}, 
date={2018}, 
note={arXiv: 1804.08272}, 
}

\bib{MR2451724}{article}{
      author={Bourgault, Y.},
      author={Coudi{\`e}re, Y.},
      author={Pierre, C.},
       title={{Existence and uniqueness of the solution for the bidomain model
  used in cardiac electrophysiology.}},
        date={2009},
     journal={Nonlinear Anal. Real World Appl.},
      volume={10},
      number={1},
       pages={458\ndash 482},
}

\bib{franzone1990mathematical}{article}{
      author={{Colli Franzone}, P.},
      author={Guerri, L.},
      author={Tentoni, S.},
       title={{Mathematical modeling of the excitation process in myocardial
  tissue: influence of fiber rotation on wavefront propagation and potential
  field.}},
        date={1990},
     journal={Math. Biosci.},
      volume={101},
      number={2},
       pages={155\ndash 235},
}

\bib{MR3308707}{book}{
      author={{Colli Franzone}, P.},
      author={Pavarino, L.~F.},
      author={Scacchi, S.},
       title={{Mathematical cardiac electrophysiology.}},
      series={{MS\&A. Modeling, Simulation and Applications}},
   publisher={Springer, Cham},
        date={2014},
      volume={13},
}

\bib{PSC05}{article}{
      author={{Colli Franzone}, P.},
      author={Pennacchio, M.},
      author={Savar{\'e}, G.},
       title={{Multiscale modeling for the bioelectric activity of the
  heart.}},
        date={2005},
     journal={SIAM J. Math. Anal.},
      volume={37},
      number={4},
       pages={1333\ndash 1370},
}

\bib{MR1944157}{incollection}{
      author={{Colli Franzone}, P.},
      author={Savar{\'e}, G.},
       title={{Degenerate evolution systems modeling the cardiac electric field
  at micro- and macroscopic level.}},
        date={2002},
   booktitle={{Evolution equations, semigroups and functional analysis
  ({M}ilano, 2000)}},
      series={{Progr. Nonlinear Differential Equations Appl.}},
      volume={50},
   publisher={Birkh{\"a}user, Basel},
       pages={49\ndash 78},
}

\bib{fitzhugh1961impulses}{article}{
      author={FitzHugh, R.},
       title={{Impulses and {P}hysiological {S}tates in {T}heoretical {M}odels
  of {N}erve {M}embrane.}},
        date={1961},
     journal={Biophys. J.},
      volume={1},
      number={6},
       pages={445\ndash 466},
}

\bib{HKKT}{article}{
      author={Hieber, M.},
      author={Kajiwara, N.},
      author={Kress, K.},
      author={Tolksdorf, P.},
       title={{Strong Time Periodic Solutions to the Bidomain Equations with FitzFugh-Nagumo Type Nonlinearities. }},
     journal={arXiv:1708.05304},
      date={2017},
}

\bib{Galdi}{book}{,
    AUTHOR = {Galdi, G. P.},
     TITLE = {An introduction to the mathematical theory of the
              {N}avier-{S}tokes equations},
    SERIES = {Springer Monographs in Mathematics},
   EDITION = {Second},
      NOTE = {Steady-state problems},
 PUBLISHER = {Springer, New York},
      YEAR = {2011},
     PAGES = {xiv+1018},
      ISBN = {978-0-387-09619-3},
   MRCLASS = {35Q30 (35-02 76D03 76D05 76D07)},
  MRNUMBER = {2808162},
       DOI = {10.1007/978-0-387-09620-9},
       URL = {https://doi.org/10.1007/978-0-387-09620-9},
}

\bib{GK}{article}{
      author={Giga, Y.},
      author={Kajiwara, N.},
       title={{On a resolvent estimate for bidomain operators and its
  applications.}},
          date={2018},
     journal={J. Math. Anal. Appl.},
     volume={459}, 
     number={1}, 
     pages={528\ndash 555}, 
}

\bib{GHK}{article}{
      author={Galdi, G.~P.},
      author={Hieber, M.},
      author={Kashiwabara, T.},
       title={{Strong time-periodic solutions to the 3D primitive equations subject to arbitrary large forces}},
          date={2017},
     journal={Nonlinearity},
     volume={30}, 
     number={10}, 
     pages={3979\ndash 3992}, 
}

\bib{GO}{article}{
      author={Gani, M.~.O.},
      author={Ogawa, T.},
       title={{Stability of periodic traveling waves in the Aliev--Panfilov reaction-diffusion system.}},
          date={2016},
     journal={Commun. Nonlinear Sci. Numer. Simul.},
     volume={33},  
     pages={30\ndash 42}, 
}

\bib{HP}{unpublished}{
author={Hieber, M.}, 
author={Pr\"uss, J.}, 
title={On the bidomain problem with FitzHugh--Nagumo transport. }, 
date={2018}, 
note={preprint}, 
}

\bib{HP2}{unpublished}{
author={Hieber, M.}, 
author={Pr\"uss, J.}, 
title={$L_q$-theory for the bidomain operator.}, 
date={2018}, 
note={preprint}, 
}

\bib{Kajiwara}{unpublished}{
      author={Kajiwara, N.},
      title={{Global strong solutions for the bidomain equations with maximal $L_{p,\mu}$ regularity. }},
        date={2018},
     note={preprint},
}

\bib{MR1673204}{book}{
      author={Keener, J.},
      author={Sneyd, J.},
       title={{Mathematical physiology.}},
      series={{Interdisciplinary Applied Mathematics}},
   publisher={Springer-Verlag, New York},
        date={1998},
      volume={8},
}

\bib{KW13}{article}{
      author={Kunisch, K.},
      author={Wagner, M.},
       title={{Optimal control of the bidomain system ({II}): uniqueness and
  regularity theorems for weak solutions.}},
        date={2013},
     journal={Ann. Mat. Pura Appl. (4)},
      volume={192},
      number={6},
       pages={951\ndash 986},
}

\bib{MM16}{article}{
      author={Mori, Y.},
      author={Matano, H.},
       title={{Stability of front solutions of the bidomain equation.}},
        date={2016},
     journal={Comm. Pure Appl. Math.},
      volume={69},
      number={12},
       pages={2364\ndash 2426},
}

\bib{rogers1994collocation}{article}{
      author={Rogers, J.~M.},
      author={McCulloch, A.~D.},
       	title = {A collocation-{G}alerkin finite element model of cardiac action potential propagation.},
        date={1994},
     journal={IEEE Trans. Biomed. Eng.},
      volume={41},
      number={8},
       pages={743\ndash 757},
}

\bib{Tun78}{article}{
      author={Tung, L.},
      title={{A bidomain model for describing ischemic myocardial d-c potentials.}},
        date={1978},
     journal={PhD Thesis, MIT.},
}

\bib{MR2474265}{article}{
      author={Veneroni, M.},
       title={{Reaction-diffusion systems for the macroscopic bidomain model of
  the cardiac electric field.}},
        date={2009},
     journal={Nonlinear Anal. Real World Appl.},
      volume={10},
      number={2},
       pages={849\ndash 868},
}

\end{biblist}
\end{bibdiv}

\end{document}